\documentclass[12pt]{amsart}
\usepackage[cp1251]{inputenc}
\usepackage{amsmath,amssymb}
\usepackage[shortlabels]{enumitem}
\usepackage{graphicx}
\usepackage{subcaption}

\newtheorem{thm}{Theorem}
\newtheorem{prop}[thm]{Proposition}
\newtheorem{cor}[thm]{Corollary}

\newtheorem{rem}[thm]{Remark}
\newtheorem{lem}[thm]{Lemma}

\def\A{\mathcal A}
\def\B{\mathcal B}
\def\C{\mathcal C}
\def\D{\mathcal D}

\def\R{\mathcal R}

\def\P{\mathcal P}
\def\S{\mathcal S}
\def\DD{\Delta}

\def\s{\smallskip}
\def\n{\noindent}
\def\v{\varphi}
\def\w{\wedge}
\def\t{\triangle}
\def\o{\overline}

\def\ss{\subset}

\title{Moving rectangular sofas in planar and spatial corridors}

\author{Oleg Mushkarov}
\address{O. Mushkarov\\
Institute of Mathematics and Informatics\\
Bulgarian Academy of Sciences\\
Acad. G. Bonchev 8, 1113 Sofia, Bulgaria}
\email{muskarov@math.bas.bg}

\author{Nikolai Nikolov}
\address{N. Nikolov\\
Institute of Mathematics and Informatics\\
Bulgarian Academy of Sciences\\
Acad. G. Bonchev 8, 1113 Sofia, Bulgaria
\vspace{1mm}
\newline Faculty of Information Sciences\\
State University of Library Studies and Information Technologies\\
Shipchenski prohod 69A, 1574 Sofia,
Bulgaria}
\email{nik@math.bas.bg}

\thanks{The second named author was partially supported by the Bulgarian National Science
Fund, Ministry of Education and Science of Bulgaria under contract KP-06-N82/6.}

\begin{document}

\subjclass[2020]{51M16, 52A38}

\keywords{rectangular sofa, corridor, moving sofa problem}

\begin{abstract}
We consider eight natural planar corridors, including the standard
$ \mathrm{L}$-shaped  one, and characterize the rectangles that
can move around their corners. As a bi-product we describe
completely the corresponding rectangles with maximum area, as well
as the rectangular parallelepipeds with maximum volume that can
move around the corners of the spatial analogues of the considered
eight planar corridors.
\end{abstract}

\maketitle

\section{Introduction}
A well-known version of the famous \emph{moving sofa problem }
\cite{LM, G,R, B} is to characterize the rectangles that can move
around the right-angled corner in an $ \mathrm{L}$-shaped corridor
with given widths. This problem has been particularly solved in
\cite{Mo} (cf. also \cite{Ka} and \cite{Mi}), where the longest
such rectangle of a given width has been found in terms of the
smallest positive root of a six degree algebraic equation.

The goal of this paper is to consider eight planar corridors,
including the standard $ \mathrm{L}$-shaped one, and to completely
solve the aforementioned problem for each of them. More precisely,
let
$$\D=\{x=a, 0<y<b\} \ , \ \A_0=\D\cup\{0<x<a\} \ , \
\B_0=\D\cup\{x>a\} ,$$
$$\A_1=A_0\cap\{y>0\} \ ,  \ \B_1=B_0\cap\{y>0\} \ , \ \B_2=\B_0\cap\{y<b\},
\ \B_3=\B_0\cap\{0<y<b\}.$$ Given a pair $(i,j)\in \{0,1\}
\times \{0,1,2,3\}$ we denote by $\R_{ij}$ the set of open
rectangles that can move around the corner in the corridor
$\C_{ij}=\A_i\cup\B_j.$\footnote{For a rectangle $R$ in the plane
this means that there is a continuous path $\Phi_t$ in the
Euclidean  group $E(2)$ of rigid motions such that
$\Phi_0(R)\ss\A,$ $\Phi_1(R)\ss\B$ and $\Phi_t(R)\ss\C_{ij}$ for
any $t\in[0,1].$} (Fig. 1)

\s

 \begin{figure}[!t]
 \begin{center}
 \includegraphics[width=0.5\textwidth]{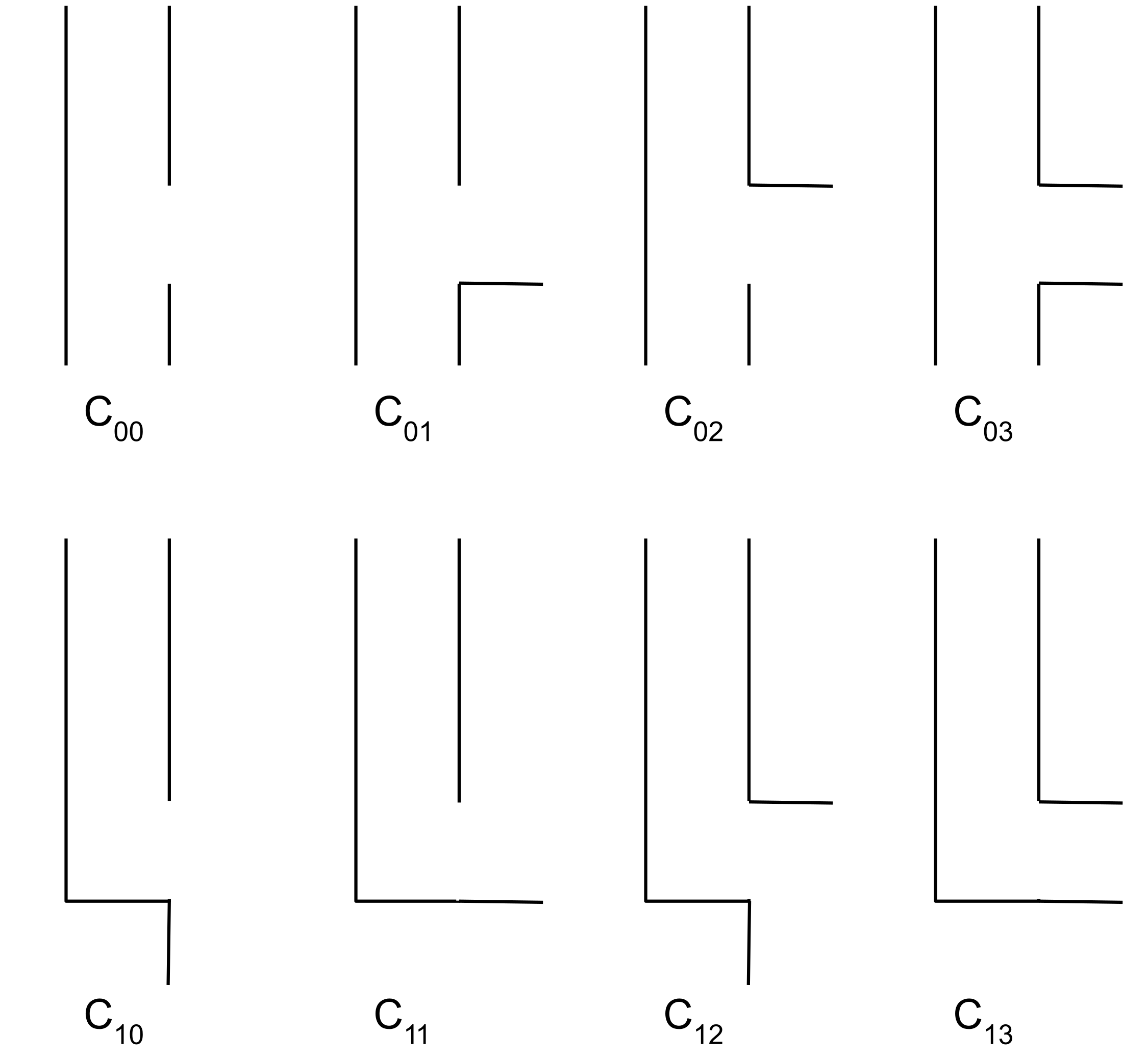} \caption{The
 corridors $\C_{ij}$.} \label{fig1}
 \end{center}
 \end{figure}

\s

For arbitrary $a,b,c>0$ let $m(a,b,c)=\min_\DD
f_{(a,b,c)},$ where $\DD =[0,\pi/2]$ and
\begin{equation}\label{1}
f_{(a,b,c)}(t)=a\sin t+b\cos t-c\sin t\cos t.
\end{equation}
Set also $l=\sqrt{a^2+b^2}, h=ab/\sqrt{a^2+b^2}$ and $ a\vee
b=\max\{a,b\}, a\w b=\min\{a,b\}.$

Our main result is the following theorem which characterizes
completely the rectangles from the sets $\R_{ij}$ and shows that
these sets are divided into four nonintersecting groups.

\begin{thm} \label{t1} The following identities hold true: $\R_{00}=\R_{01}=\R_{02}$,
$\R_{10}=\R_{12}$, $\R_{11}=\R_{13}$, $\R_{00}=\R_{10}\cup\R_{03}$,
$\R_{13}=\R_{10}\cap\R_{03}$. Moreover, a rectangle with side
lengths $c$ and $d , c\ge d , $ belongs to the set:

(i) $\R_{0i}, i=0,1,2,$ if and only if $d\le a\w b, cd\le
a$;

(ii) $\R_{03}$, if and only if

($ii_1$) $h \le d\le a\w b, cd\le ab$, or ($ii_2$) $d\le h\w
m(a,b,c);$

(iii) $\R_{1i}, i=0,2,$ if and only if

($iii_1$) $d\le h, cd\le ab,$ ($iii_2$) $h\le d\le a\w b,
c\le a\vee b,$ or $(iii_3$) $h\le d\le m(a,b,c);$

(iv) $\R_{1i}, i=1,3$ if and only if

($iv_1$) $c\le a\vee b, d\le a\w b,$ or ($iv_2$) $d\le
m(a,b,c).$
\end{thm}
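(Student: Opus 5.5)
The plan is to first derive the set identities from monotonicity of the corridors, and then prove (i)--(iv) by explicit motions for sufficiency and a "critical position" argument for necessity. Since $\C_{13}\ss\C_{ij}$ for every $(i,j)$, $\C_{1j}\ss\C_{0j}$, and $\B_3\ss\B_1,\B_2\ss\B_0$, we get inclusions such as $\R_{13}\ss\R_{11}\ss\R_{10}$ and $\R_{1j}\ss\R_{0j}$ for free. The stated identities will then follow once (i)--(iv) are established, by comparing the inequalities. For instance, $\R_{00}=\R_{10}\cup\R_{03}$ should reduce to checking that the union of the regions in (ii) and (iii) is exactly the region in (i), and similarly for $\R_{13}=\R_{10}\cap\R_{03}$. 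So the real content is the four characterizations. For each of them I will prove sufficiency and necessity separately.

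\textbf{Sufficiency} will be done by explicit motions in three steps.
\begin{enumerate}[(a)]
\item The conditions of type ``$c\le a\vee b$, $d\le a\w b$'' (as in $(iv_1)$, $(iii_2)$) are realized by pure translations. Suppose, say, $c\le a$. Place the rectangle with side $c$ horizontal inside the vertical strip, lower it to the bottom of the corner box $[0,a]\times[0,b]$, and slide it horizontally into $\B_j$, which works because $d\le b$. The case $c\le b$ is symmetric.
\item The conditions $d\le m(a,b,c)$ (as in $(iv_2)$, $(iii_3)$, $(ii_2)$) are realized by the standard rotation. Put the rectangle at angle $t\in\DD$ with two vertices of a long side on the outer walls $x=0$ and $y=0$. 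An elementary computation shows that the distance from the inner corner $(a,b)$ to the line carrying that long side equals $f_{(a,b,c)}(t)$ from \eqref{1}. Hence if $d\le f_{(a,b,c)}(t)$ for all $t$, the rectangle rotates from $t=\pi/2$ to $t=0$ without hitting the inner corner.
\item The conditions involving $h$ and $cd\le ab$ use the corridors where one outer wall is missing (no constraint $y>0$ or $y<b$). Here the only obstruction is the pair of endpoints $(a,0),(a,b)$ of the door $\D$ together with the wall $x=0$ or $x=a$. The number $h$ is the altitude from the origin of the right triangle with legs $a,b$. I will move the rectangle so that one long side slides along the hypotenuse-type segment while the rectangle pivots through the door. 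The condition $cd\le ab$ should be exactly the condition that the rectangle fits in the triangle-like region through the whole pivot. I would compute this motion explicitly for $\C_{03}$ and $\C_{10}$; $\C_{0i}$ is the easiest case, since there only the door matters.
\end{enumerate}

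\textbf{Necessity} will use a continuity argument. Along any admissible path $\Phi_t$, the rectangle starts in $\A$ and ends in $\B$, so at some time it crosses the line $x=a$ only through $\D$. Let $\theta(t)$ be the angle of the long side. At the first time the rectangle meets $\{x\ge a\}$, and at the time its crossing set with the line $x=a$ is longest, I will extract inequalities.
\begin{itemize}
\item The chord of the line $x=a$ cut by the rectangle must lie in $(0,b)$.
\item The projection of the rectangle onto the $x$-axis, restricted to the part still in $\A_i$, must be $\le a$.
\end{itemize}
If $\theta$ never leaves a range where the rectangle fits by projection, we obtain the ``$c\le a\vee b$'' alternatives. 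Otherwise, by the intermediate value theorem, $\theta$ passes through every angle in $\DD$. At each angle, a configuration argument (pushing the rectangle against the outer walls cannot increase clearance) gives $d\le f_{(a,b,c)}(\theta)$, hence $d\le m(a,b,c)$. For the cases with $h$, I would use the angle $t_0$ at which the long side is parallel to the segment from $(a,0)$ to $(0,b)$, i.e.\ $\tan t_0=b/a$, where the available width is exactly $h$; the area bound $cd\le ab$ comes from the triangle containing the rectangle at that moment.

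\textbf{Main obstacle.} I expect the hardest step to be necessity in (ii) and (iii), where the answer is a disjunction split at $d=h$. There one must show that a rectangle with $d>h$ cannot pivot at all and so must satisfy the translational conditions, while for $d\le h$ the critical angle argument gives $cd\le ab$ or $d\le m$. I would first try to prove a lemma: for $d>h$ the angle $\theta(t)$ is confined to a neighbourhood of $0$ or $\pi/2$ throughout the motion. This reduces those cases to projection inequalities.
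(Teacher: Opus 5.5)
Your reduction of the set identities to (i)--(iv), and your steps (a) and (b), agree with the paper; step (b) is Lemma~\ref{l2}. The two places you flag as hard, however, contain genuine gaps.

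\textbf{The role of $h$.} You misidentify what $h$ does, and the lemma you propose (for $d>h$ the rectangle cannot turn, so only translational conditions survive) is false. The statement itself contradicts it: for $d\ge h$ the conditions are $cd\le ab$ in ($ii_1$) and $d\le m(a,b,c)$ in ($iii_3$), and neither is translational. For example, take $a=b=1$, $c=6/5$, $d=4/5$. Then $d>h=1/\sqrt2$ and $c>a\vee b$, yet $m(1,1,6/5)=\sqrt2-3/5>4/5$, so the rectangle goes around the corner of $\C_{13}\subset\C_{03}\cap\C_{10}$ by the standard rotation.

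What $h$ actually controls (Remark \textbf{(B)} of the Appendix) is where the minimising tangent of the convex function $g_{(a,b,d)}$ lies relative to the tangent to $k(C;d)$ through $A=(a,0)$. That tangent cuts the segment $[AA_2]$ of length $ab/d$, and one checks that $g'_{(a,b,d)}$ at it has the sign of $h-d$. Consequences:
\begin{itemize}
\item For $d<h$, the shortest tangent meets $Ox$ to the right of $A$, where $\C_{03}$ has a floor. At the moment the long sides are parallel to it one gets $c\le n(a,b,d)$, i.e.\ $d\le m(a,b,c)$. In $\C_{10}$ that tangent is harmless, and only $cd\le ab$ from Proposition~\ref{p1} remains.
\item For $d>h$, the two corridors swap roles.
\end{itemize}

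The same fact drives sufficiency, and your step (c) does not supply it. In $\C_{03}$ with $d\ge h$, first perform the door pivot of Proposition~\ref{p1}, reflected in $y=b/2$, until a long side lies on the $A$-tangent with a vertex at $A$. Then perform the corner rotation of Lemma~\ref{l2}, but only through the directions of tangents meeting $Ox$ at abscissa $\ge a$; there $g_{(a,b,d)}\ge ab/d\ge c$ by convexity. In $\C_{10}$ with $d\le h$ the order is reversed: rotate until the lower end of a long side reaches $A$ (this involves only tangents meeting $[OA]$), then translate through the door.

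\textbf{Turning direction.} In your necessity argument for $d\le m(a,b,c)$, the intermediate value theorem only tells you that the long side sweeps either all directions of negative slope or all directions of positive slope. Only the first case (a rotation) gives $d\le f_{(a,b,c)}(\theta)$ for all $\theta$. If the rectangle turns the other way (an anti-rotation), the configuration argument applies to the short sides and yields only $c\le\widetilde m(a,b,d)$, i.e.\ $d\le\min_{[t_1,t_2]}g_{(a,b,c)}$ (Lemma~\ref{l3}).

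You still need to show that this forces $d\le m(a,b,c)$ when $c>a\vee b$. The paper proves this as inequality~(\ref{3}), using two facts: $f_{(a,b,c)}(\pi/2-t)\ge g_{(a,b,c)}(t)$, and the minimiser of $t\mapsto f_{(a,b,c)}(\pi/2-t)$ lies in $[t_1,t_2]$ because $c^2\le a^2+b^2$. This step is not automatic: Remark~\ref{r2} shows that without $c\ge a\vee b$ an anti-rotation can succeed where a rotation fails. In $\C_{03}$ the reflection $y\mapsto b-y$ removes the issue, but in $\C_{10}$ and $\C_{13}$ it must be argued.
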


Note that the above conditions do not depend on the position of
the rectangle $R$ in the corridor $A_i$. Observe also  that the
case $d=0$ in Theorem \ref{t1} corresponds to the so-called
\emph{ladder problem} for $\mathrm{L}$-shaped corridors \cite{Ka}.
Its solution for the corridors $\C_{ij}$ is given by

\s

\begin{cor} \label{cr1} The length of the longest line segment that can move
around the corner in the corridor $\C_{03} , \C_{11} ,$or
$\C_{13}$ is $(a^{2/3}+b^{2/3})^{3/2},$ whereas for the remaining
five corridors, this can be done for a segment of arbitrary
length.
\end{cor}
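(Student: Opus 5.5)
We set $d=0$ in Theorem \ref{t1} and read off the conditions on $c$, treating a segment of length $c$ as the degenerate rectangle; the limit is justified because every condition in Theorem \ref{t1} is closed and monotone in $d$. We do this corridor by corridor.

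For the five corridors $\C_{00},\C_{01},\C_{02},\C_{10},\C_{12}$ the answer is immediate. Conditions (i) and ($iii_1$) with $d=0$ read $0\le a\w b$, $0\le a$, respectively $0\le h$, $0\le ab$. These hold for every $c$, so a segment of any length can pass.

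For $\C_{03},\C_{11},\C_{13}$, note first that $h<a\w b$, so $d=0<h$.
\begin{itemize}
\item In (ii), case ($ii_1$) cannot occur, and ($ii_2$) reduces to $m(a,b,c)\ge 0$.
\item In (iv), case ($iv_2$) is again $m(a,b,c)\ge0$, while ($iv_1$) is $c\le a\vee b$.
\end{itemize}
We will show below that $c\le a\vee b$ already implies $m(a,b,c)\ge 0$. Granting this, in all three corridors the admissible segments are exactly those with $\min_{t\in\DD}\big(a\sin t+b\cos t-c\sin t\cos t\big)\ge0$.

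The main step is to compute the largest such $c$. On $(0,\pi/2)$, the inequality $f_{(a,b,c)}(t)\ge0$ is equivalent to
$$c\le g(t):=\frac{a}{\cos t}+\frac{b}{\sin t}.$$
At the endpoints $f(0)=b>0$ and $f(\pi/2)=a>0$, so they impose no constraint. Hence the maximal length is $\min_{(0,\pi/2)} g$, the classical ladder computation. From
$$g'(t)=\frac{a\sin t}{\cos^2 t}-\frac{b\cos t}{\sin^2t},$$
the unique critical point satisfies $\tan^3 t=b/a$, and $g\to\infty$ at both ends, so this point is the minimum. Substituting $\tan t=(b/a)^{1/3}$ gives $\cos t=a^{1/3}/\sqrt{a^{2/3}+b^{2/3}}$ and $\sin t=b^{1/3}/\sqrt{a^{2/3}+b^{2/3}}$. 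Therefore
$$g_{\min}=\big(a^{2/3}+b^{2/3}\big)^{1/2}\big(a^{2/3}+b^{2/3}\big)=\big(a^{2/3}+b^{2/3}\big)^{3/2}.$$

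It remains to prove the claim used above. Since $\sin t,\cos t\le1$, we have
$$g(t)\ge \frac{a}{\cos t}+\frac{b}{\sin t}\ge a+b> a\vee b$$
for every $t$, so $c\le a\vee b$ gives $c<g$ on $(0,\pi/2)$, i.e. $m(a,b,c)\ge0$. Thus ($iv_1$) adds nothing beyond ($iv_2$), and the answer for the three corridors is $(a^{2/3}+b^{2/3})^{3/2}$.

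The only delicate point is the legitimacy of the degenerate case $d=0$. The set of admissible $c$ is closed, and the value $(a^{2/3}+b^{2/3})^{3/2}$ itself is attained, since at that $c$ we have $m=0$ and $f$ vanishes only at an interior point, where the segment touches the inner corner. If open rectangles are insisted upon, we instead take $d\to0^+$ and use continuity of $m$ in $c$.
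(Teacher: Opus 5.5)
Your argument follows the paper's own route: set $d=0$ in Theorem \ref{t1} and minimize $g_{(a,b,0)}(t)=a/\cos t+b/\sin t$ (Appendix \textbf{B}). Your critical point $\tan t_0=(b/a)^{1/3}$ is the correct one; the paper's $(b/a)^3$ is a misprint. Your check that ($iv_1$) is subsumed by ($iv_2$) makes explicit a step the paper leaves implicit.

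Your last paragraph needs a correction. The corridors are open and $C=(a,b)\notin\C_{ij}$, so at the critical direction a segment of length exactly $(a^{2/3}+b^{2/3})^{3/2}$ would have to contain $C$. That value is therefore a supremum and is not attained; the paper's ``longest'' glosses over the same point. Also, your remark that the conditions are ``closed and monotone in $d$'' does not by itself relate segments to rectangles. The clean justification is this: a moving closed segment stays a positive distance from the complement of the open corridor (its swept set is compact), so thin rectangles around it also move. Conversely, the midline of a moving thin rectangle moves.
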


Another consequence of Theorem \ref{t1} (using inequality
(\ref{8})) is the following

\s

\begin{cor} \label{cr2}  The rectangles from the sets  $\R_{ij}$ have maximum area $ab$.
Moreover, the maximum area rectangles from the set:

(i) $\R_{0i} , i=0,1,2,$ have side lengths $ab/d$ and $d,$ where
$0<d\le a\w b$;

(ii) $\R_{03}$ have side lengths $ab/d$ and $d,$ where $h\le d\le
a\w b $;

(iii) $\R_{1i} , i=0,2,$ have side lengths $a$ and $b$, or $ab/d$
and $d,$ where  $0<d\le h$;

(iv) $\R_{1i} , i=1,3, $ have side lengths $a$ and $b$, or $l$ and
$h$.
\end{cor}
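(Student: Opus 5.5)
Corollary \ref{cr2} follows from Theorem \ref{t1} together with one elementary bound on $m(a,b,c)$, which we take to be inequality (\ref{8}) referred to above. Let the rectangle have sides $c\ge d$. In each case we first show that $cd\le ab$ on every branch of the corresponding criterion. We then find exactly when equality $cd=ab$ can occur. Since the sets in (i)--(iv) satisfy $\R_{13}\ss\R_{10}\ss\R_{00}$ and $\R_{13}\ss\R_{03}\ss\R_{00}$ by Theorem \ref{t1}, and since $a\times b$ belongs to $\R_{13}$ by condition $(iv_1)$, the maximum area is exactly $ab$ for all eight corridors once we have the upper bound on $\R_{00}$.

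The key auxiliary estimate concerns the branches given by $d\le m(a,b,c)$. We claim that $d\le m(a,b,c)$ implies $cd\le ab$, with equality only when $c=l$ and $d=h$. To see this, evaluate $f_{(a,b,c)}$ at the angle $t_0$ with $\tan t_0=a/b$, so that $\sin t_0=a/l$ and $\cos t_0=b/l$. This gives
$$m(a,b,c)\le f(t_0)=\frac{a^2+b^2}{l}-c\frac{ab}{l^2}=l-\frac{c h}{l}.$$
Hence $d\le l-ch/l$, that is, $cd\le c l-c^2h/l$. The right-hand side is a quadratic in $c$ whose maximum value $l^2/(4h)\cdot h\cdot$ would be too weak, so this single evaluation does not suffice. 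A better route is to use $c\ge d$. From $d\le l-ch/l$ and $d\le c$ we get $d\le l-dh/l$ at worst, but again that is not enough on its own.

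Instead, we work with the inequality $d\le a\sin t+b\cos t-c\sin t\cos t$ for all $t$, written as $c\sin t\cos t\le a\sin t+b\cos t-d$. Multiplying by $d$ gives $cd\sin t\cos t\le d(a\sin t+b\cos t)-d^2$. Maximizing the right-hand side in $d$ gives $(a\sin t+b\cos t)^2/4$. Hence
$$cd\le \frac{(a\sin t+b\cos t)^2}{4\sin t\cos t}\quad\text{for every }t.$$
At $t_0$ this bound equals $l^2\cdot l^2/(4ab\,l^0)/\ldots$; we will simply compute the minimum over $t$. For $a=b$ and $t=\pi/4$ it equals $a^2$, so the bound is exactly $ab$ there. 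In general I expect the minimum over $t$ to be $ab$, by AM–GM: $(a\sin t+b\cos t)^2\ge 4ab\sin t\cos t$. Unfortunately that is the wrong direction for bounding the minimum. So presumably (\ref{8}) is precisely the statement that $d\le m(a,b,c)$ forces $cd\le ab$, with equality iff $d=h$ and $c=l$, and I will take it as given. Verifying it directly, by combining the bound $d\le f(t)$ at two well-chosen angles, is the main obstacle.

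With this in hand, we go through the cases. In (i), $cd\le a\cdot$(the bound in the theorem), so equality means $c=ab/d$ with $d\le a\w b$; the condition $c\ge d$ is then automatic. In (ii), branch $(ii_1)$ gives $ab/d$ and $d$ with $h\le d\le a\w b$. Branch $(ii_2)$ gives equality only at $(l,h)$, which is already included as the case $d=h$. In (iii), branch $(iii_1)$ gives $ab/d$ and $d$ with $d\le h$. In branch $(iii_2)$ we have $cd\le(a\vee b)(a\w b)=ab$, with equality iff $c=a\vee b$ and $d=a\w b$, which is the $a\times b$ rectangle. Branch $(iii_3)$ gives equality only at $(l,h)$, already covered. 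In (iv), branch $(iv_1)$ gives $a\times b$ and branch $(iv_2)$ gives $l\times h$. Throughout, equality in each branch is attained exactly at these boundary choices of $c$ and $d$.
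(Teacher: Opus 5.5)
Your overall reduction matches the paper's: apply Theorem~\ref{t1} branch by branch, together with inequality (\ref{8}), which is proved inside the proof of Theorem~\ref{t1}(iii). That inequality says $d\le m(a,b,c)$ implies $cd\le ab$, with equality \emph{only if} $c=l$ and $d=h$.

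What you call the main obstacle is a one-line evaluation, and both of your attempts were one step from it. You evaluated $f_{(a,b,c)}$ at the complementary angle. The right choice is $\varphi=\arccos(a/l)$, i.e. $\sin\varphi=b/l$, $\cos\varphi=a/l$. There $a\sin\varphi+b\cos\varphi=2ab/l$ and $\sin\varphi\cos\varphi=ab/l^2$, so
\[
cd\le c\,f_{(a,b,c)}(\varphi)=ab\Bigl(\frac{2c}{l}-\frac{c^2}{l^2}\Bigr)=ab\Bigl(1-\bigl(1-\tfrac{c}{l}\bigr)^2\Bigr)\le ab ,
\]
with equality only if $c=l$ and $d=f_{(a,b,l)}(\varphi)=h$.

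Your AM--GM remark is not ``the wrong direction'' either. You need the bound $cd\le (a\sin t+b\cos t)^2/(4\sin t\cos t)$ at a single $t$ only. At the $t$ with $a\sin t=b\cos t$ (again $t=\varphi$) AM--GM is an equality, so the right-hand side equals $ab$ there. Also, in (i) use $cd\le ab$ from Proposition~\ref{p1}; the ``$cd\le a$'' in the statement of Theorem~\ref{t1} is a misprint.

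The genuine gap is the attainment claim in (iv). Inequality (\ref{8}) only rules out equality $cd=ab$ on the branch $d\le m(a,b,c)$ except at $(c,d)=(l,h)$. It does not show that the $l\times h$ rectangle actually belongs to $\R_{13}$ (and $\R_{11}$). Since $l>a\vee b$, branch $(iv_1)$ cannot supply it, so you must verify $h\le m(a,b,l)$. Your sentence ``equality in each branch is attained exactly at these boundary choices'' asserts this without proof.

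The paper gets $m(a,b,l)=h$ from the Appendix. Directly, write $a=l\cos\varphi$, $b=l\sin\varphi$. Then $f_{(a,b,l)}(t)=l\sin(t+\varphi)-\frac{l}{2}\sin 2t$ and $h=\frac{l}{2}\sin 2\varphi$, hence
\[
f_{(a,b,l)}(t)-h=l\sin(t+\varphi)\bigl(1-\cos(t-\varphi)\bigr)\ge 0,\qquad t\in\DD .
\]
In (ii) and (iii) no such check is needed, since $(l,h)$ is already produced by $(ii_1)$ and $(iii_1)$ with $d=h$, as you noted.
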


In Section 4 of the paper we consider the spatial corridors
$\S_{ij}=\C_{ij}\times (0,c) , c>0,$ and describe completely the
rectangular parallelepipeds  with maximum volume that can move
around the corner in $\S_{ij}$. Denoting the set of these
parallelepipeds by $\P_{ij}$ we prove the  following theorem which
gives a partial answer to \cite[Question 4, p. 200]{Mo}.

\begin{thm} \label{t2} Any $P\in
\P_{ij}$ has volume $abc$ and $P=R\times (0,c)$ for a rectangle
$R\in \R_{ij}$ with maximum volume.
\end{thm}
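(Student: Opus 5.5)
We reduce the spatial problem to the planar one through projection and a volume comparison. Note first that $c$ is now also the height of the spatial corridor $\S_{ij}=\C_{ij}\times(0,c)$, which conflicts with the use of $c$ in Theorem~\ref{t1}. We therefore write $p\ge q\ge r$ for the edge lengths of a box $P$.

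The lower bound is immediate. If $R\in\R_{ij}$ has area $ab$ (such rectangles exist by Corollary~\ref{cr2}), then moving $R\times(0,c)$ by the planar motion of $R$ shows that boxes of volume $abc$ lie in the admissible class. It remains to show two things: every box $P$ that moves around the corner of $\S_{ij}$ has volume at most $abc$, and equality forces $P=R\times(0,c)$ with $R$ a maximal rectangle.

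For the upper bound, all eight corridors $\C_{ij}$ are contained in $\C_{00}$, so $\S_{ij}\ss\S_{00}$ and it suffices to bound volumes for $\S_{00}$. At the start $P\ss\A_0\times(0,c)$ and at the end $P\ss\B_0\times(0,c)$. We track, for each level $z\in(0,c)$, the horizontal cross-section $P_t\cap\{z=\mathrm{const}\}$. This is a convex polygon lying in $\C_{00}$. By continuity of the motion, at some time it must intersect both the region $x<a$ and the region $x>a$ through the door segment $\D$, whose length is $b$. The cleanest route is a Fubini/averaging argument, applied at the moment when the volume of $P_t\cap\{x<a\}$ equals half of $|P|$ (or a similar intermediate-value choice).
\begin{itemize}
\item Each horizontal slice is a convex set passing through a door of width $b$, adjacent to a half-strip of width $a$.
\item Its area is therefore at most $ab$ by the planar argument used for Corollary~\ref{cr2}, namely inequality (\ref{8}).
\item Integrating over $z\in(0,c)$ gives $|P|\le abc$.
\end{itemize}
The difficulty is that a tilted box has slices that are not rectangles. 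We therefore first need a planar lemma: \emph{any convex set in $\C_{00}$ meeting both $\{x<a\}$ and $\{x>a\}$ only across $\D$ …} As stated this is false, since in $\C_{00}$ the set may extend far in $y$. The slice-area bound must instead use that the slices vary continuously in a single rigid motion.

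A more robust alternative handles the tilting directly. If the box is not tilted, meaning one edge stays vertical throughout, then $P=Q\times I$ with $Q$ a rectangle moving in $\C_{ij}$ and $I\ss(0,c)$. Theorem~\ref{t1} and Corollary~\ref{cr2} then give $|P|\le ab\,|I|\le abc$, with equality iff $I=(0,c)$ and $Q$ is of maximal area. In the tilted case we use the edge-length constraints. Look at the first moment the box touches the door line $x=a$ from both sides. The box must fit in the slab $0<z<c$, and in the initial position it also lies in $0<x<a$. Hence its width in the direction $(1,0,0)$ is at most $a$ and its width in $(0,0,1)$ is at most $c$. Width of a box in a unit direction $u$ equals $\sum \ell_k|u\cdot e_k|$. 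Combining these two widths with the door constraint $b$ should give $pqr\le abc$ via AM–GM, with equality only when the box is axis-aligned with its $c$-edge vertical.

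The main obstacle is exactly this tilted case: turning the door constraint into a width bound along a third direction. The bound must hold at a time when the box also still respects the slab constraints. I would try to choose the time $t_0$ when the plane $x=a$ bisects the box. At that time the cross-section of $P$ by $x=a$ lies in the rectangle $\{0<y<b,\,0<z<c\}$. Combining the area of that cross-section, which is at most $bc$, with the $x$-width, which is at most $a$, should yield the volume bound by a Brunn–Minkowski/Fubini estimate. The equality analysis then reduces to Corollary~\ref{cr2}.
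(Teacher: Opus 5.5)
There is a genuine gap. Your lower bound and the reduction to $\S_{00}$ are fine, but the core inequality $V_P\le abc$ for a box that tilts during the motion is only conjectured (``should give'', ``should yield''). The moment you propose to use cannot deliver it.

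When $\{x=a\}$ bisects $P$, the $x$-width of $P$ is not bounded by $a$. Only the part of $P$ in $\{x<a\}$ is confined to the strip; the part past the door lies in $\{x>a\}$, which is unbounded in the $x$-direction for every $\C_{ij}$. Moreover, no estimate that uses only the configuration at that moment can give the sharp bound. The box $(0,2a)\times(0,b)\times(0,c)$ lies in every $\S_{ij}$ and is bisected by $\{x=a\}$. It meets this plane exactly in $\D\times(0,c)$, of area $bc$, and its half in $\{x<a\}$ has $x$-width $a$. Yet its volume is $2abc$.

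The width/AM--GM variant has a similar defect. The bound ``$x$-width $\le a$'' holds at the initial position, whose orientation differs from the orientation when the box crosses the door, so it cannot be combined with a constraint at the door.

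What is missing is the right critical moment. The paper takes the last moment at which at least six vertices of $P$ lie in $\A_i\times(0,c)$. Let $\D'=\overline{\D}\times[0,c]$. At that moment three vertices $K,L,M$ of one face lie in $\B_j\times(0,c)\cup\D'$, with $M\in\D'$. The corresponding vertices $K_1,L_1,M_1$ of the opposite face lie in $\overline{\A_i}\times[0,c]$. Hence the edges $KK_1$ and $LL_1$ cross $\D'$ at points $K_2$ and $L_2$. Let $\theta$ be the angle between the plane $(KLM)$ and $\{x=0\}$. Projecting along these edges gives $2S_{\triangle KLM}=2S_{\triangle K_2L_2M}\cos\theta\le bc\cos\theta$, while $|MM_1|\le a/\cos\theta$. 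The factors $\cos\theta$ cancel and $V_P\le abc$.

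Your equality analysis is also incomplete. Equality forces $\triangle K_2L_2M$ to have area $bc/2$, so $P=R\times(0,c)$ at that moment. To conclude $R\in\R_{ij}$, you still need $P$ to keep this product form throughout the motion. That is exactly Lemma~\ref{l1}: a box with two opposite faces on the planes $z=0$ and $z=c$ cannot tilt in the layer between them. Only after that does Corollary~\ref{cr2} identify $R$ as a maximal-area rectangle.
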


As a consequence of Theorem \ref{t2} and Corollary \ref{cr1} we
obtain the following solution of the \emph{spatial leader
problem}.

\begin{cor} \label{cr3} The length of the longest line segment that can move
around the corner in the corridor $\S_{03} , \S_{11} ,$ or
$\S_{13}$ is $((a^{2/3}+b^{2/3})^{3}+c^2)^{1/2},$ whereas for
the remaining five corridors, this can be done for a segment of
arbitrary length.
\end{cor}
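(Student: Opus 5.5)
We derive Corollary \ref{cr3} from Theorem \ref{t2} and Corollary \ref{cr1}. A segment can be treated as a degenerate parallelepiped, but degenerate bodies are not covered by Theorem \ref{t2}, which concerns maximum volume. So we argue directly, in two parts: a lower bound by construction and an upper bound by projection.

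\emph{Five unbounded corridors.} Corollary \ref{cr1} gives, for every $L>0$, a planar motion in $\C_{ij}$ carrying a segment of length $L$ from $\A$ to $\B$. We place this segment in the slice $z=c/2$ of $\S_{ij}=\C_{ij}\times(0,c)$ and run the same motion there. This moves a segment of arbitrary length around the corner of $\S_{ij}$.

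\emph{Lower bound for $\C_{03},\C_{11},\C_{13}$.} Let $L_0=(a^{2/3}+b^{2/3})^{3/2}$ and fix $\varepsilon>0$. By Corollary \ref{cr1}, a planar segment of length $L_0-\varepsilon$ moves in $\C_{ij}$. Tilt it so that it joins a point at height $z=\delta$ to a point at height $z=c-\delta$, with horizontal projection of length $L_0-\varepsilon$. Then we move it by applying the planar motion to that projection while keeping the heights fixed. Its length is $((L_0-\varepsilon)^2+(c-2\delta)^2)^{1/2}$, which tends to $(L_0^2+c^2)^{1/2}$. Since the segments are open, or in any case the supremum is what is claimed, this gives the lower bound. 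Note that $L_0^2=(a^{2/3}+b^{2/3})^3$, as in the statement.

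\emph{Upper bound.} Suppose a segment $s$ moves from $\A_i\times(0,c)$ to $\B_j\times(0,c)$ inside $\S_{ij}$. Let $\pi$ be the projection to the $xy$-plane. Its vertical extent stays below $c$, so at every time $t$ the length of $s$ is less than $(|\pi(s_t)|^2+c^2)^{1/2}$. The projections $\pi(s_t)$ form a continuous family of segments, possibly varying in length and possibly degenerating to points, which moves in $\C_{ij}$ from $\A$ to $\B$. Hence it suffices to show the following: if a continuous family of planar segments moves around the corner of $\C_{ij}$, then $\min_t|\pi(s_t)|\le L_0$.

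This planar step is the main obstacle, because the ladder problem with variable length is not literally Corollary \ref{cr1}. I would take the usual argument behind that corollary. At some moment the direction of the segment makes angle $\theta\in[0,\pi/2]$ such that the segment must cross the inner corner region. At that moment it lies in the set of segments through the inner corner point $(a,b)$, bounded by the outer walls, whose length is at most
\[
\frac{a}{\cos\theta}+\frac{b}{\sin\theta}\quad\text{(up to the labeling of sides)}.
\]
The minimum of this expression over $\theta$ is exactly $L_0$. Every direction is forced by continuity of the direction angle, which goes from horizontal-like to vertical-like. So at the angle $\theta$ where the segment crosses the diagonal, its length is at most $\frac{a}{\cos\theta}+\frac{b}{\sin\theta}$. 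This bound does not immediately give $\le L_0$ at one time, so I would refine the argument. Consider the whole motion and use the fact that a segment which fits at angle $\theta$ across the corner has length at most $g(\theta)$. Then argue that the path passes through an angle $\theta_0$ that minimizes $g$ while crossing the corner, which yields a projected length of at most $L_0$ at that time. Combined with the previous paragraph, this gives length at most $(L_0^2+c^2)^{1/2}$.
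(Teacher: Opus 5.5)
\paragraph{The unproved step.} Three parts of your proposal are correct. The five unbounded corridors are handled correctly. The lower bound, obtained by tilting a planar ladder of length $L_0-\varepsilon$ between heights $\delta$ and $c-\delta$, is correct. So is the reduction of the upper bound, via $|s|^2<|\pi(s_t)|^2+c^2$, to showing $\mu:=\min_t|\pi(s_t)|\le L_0$. You are also right that Theorem~\ref{t2} concerns boxes of positive volume and does not literally apply to a segment. The paper gives no argument beyond calling the corollary a consequence of Theorem~\ref{t2} and Corollary~\ref{cr1}, and your projection route is the natural way to make that precise.

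The genuine gap is the last step: you never prove $\mu\le L_0$. You assert, but do not prove, that the direction passes through the minimizer $\theta_0$ of $a/\cos\theta+b/\sin\theta$ while the segment crosses the corner. As stated, that sketch also fails to cover all cases:
\begin{itemize}
\item You treat only one sense of rotation. For positive slopes the chord bound $a/\cos\theta+b/\sin\theta$ is not the relevant one.
\item In $\C_{11}$ the target region $\B_1$ is a quadrant. The final direction is therefore not forced to be horizontal-like, so your claim that the direction ``goes from vertical-like to horizontal-like'' breaks down. Moreover, a segment crossing $\D$ with positive slope can have an arbitrarily long part in $\B_1$.
\item In $\C_{03}$ there are two inner corners, $(a,b)$ and $(a,0)$.
\end{itemize}
Making a direct variable-length ladder argument rigorous would need a real case analysis, which you have not done.

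\paragraph{The missing idea.} No variable-length ladder theorem is needed; Corollary~\ref{cr1} already suffices. If $\mu=0$, then at some moment $s$ is vertical and $|s|<c$. If $\mu>0$, proceed as follows.
\begin{itemize}
\item The midpoint $M(t)$ and the unit direction $u(t)$ of $\pi(s_t)$ depend continuously on $t$. Lifting the angle of $u(t)$ gives a continuous path in $E(2)$.
\item Consider the centered subsegment $[M(t)-\tfrac{\mu}{2}u(t),\,M(t)+\tfrac{\mu}{2}u(t)]$. It is contained in $\pi(s_t)$ and has fixed length $\mu$.
\item This subsegment stays in $\C_{ij}$, starts in $\A_i$ and ends in $\B_j$, because it is a subset of $\pi(s_t)$ at every time.
\end{itemize}
It is therefore a rigid segment of length $\mu$ moving around the corner. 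Corollary~\ref{cr1} then gives $\mu\le L_0$, and hence $|s|<(L_0^2+c^2)^{1/2}$. If you replace your last paragraph with this observation, the proof is complete.
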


The paper is organized as follows. In Section 2 we characterize
the rectangles from the sets $\R_{00}$ and $\R_{13}$ (Proposition
\ref{p1} and Proposition \ref{p2}, respectively) and in Section 3
we use these results to prove Theorem \ref{t1}. Let us mention two
difficulties that arise here. The first is the justification of
the corresponding necessary conditions, since when moving a
rectangle in a corridor it can rotate around the corner in either
direction. Note that in \cite {Mo} and \cite{Ka} the case of
``anti-rotation'' (see the proof of Proposition \ref{p2}) has not
been considered and a rigorous proof of the necessary condition
for $\mathrm{L}$-shaped corridors is missing. The second
difficulty is that for proving the sufficiency of the conditions
for the side lengths of a rectangle in $R_{03}$ one has to combine
the movements used for rectangles in $\R_{00}$ and $\R_{13}$.

The proof of Theorem \ref{t2}, given in Section 4, is two-step. We
first prove the inequality $V_P\le abc$, for a rectangular
parallelepiped $P$ that can move around the corner of a corridor
$S_{ij}$. This is done by analyzing the position of $P$ at the
last moment until at least six of its vertices lie in
$\A_{i}\times (0,c)$. Then we prove that if $V_P=abc$, then
$P=R\times (0,c)$ for a rectangle $R\in \R_{ij}$ with maximum
volume. This follows by combining the above analysis and the
following \s

\begin{lem} \label{l1} Let $\alpha$ and $\beta$ be parallel planes
in the space and $P$ be a rectangular parallelepiped  with two
opposite faces on these planes. Then at any moment of a movement
of $P$ in the layer between $\alpha$ and $\beta$ these faces lie
on them.
\end{lem}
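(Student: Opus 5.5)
The plan is to reduce the statement to a property of the \emph{width} of $P$ in the direction normal to the planes, and then to use a connectedness argument along the motion. Let $n$ be a unit normal to $\alpha$ and $\beta$. Since the distance between the planes equals the length of the edges of $P$ joining the two given faces, we may denote the edge lengths of $P$ by $a_1,a_2,a_3$ with $a_3$ equal to the width $c$ of the layer. At time $t$ let $u_1(t),u_2(t),u_3(t)$ be the orthonormal edge directions of $\Phi_t(P)$. These depend continuously on $t$, and $u_3(0)=\pm n$.

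The width of $\Phi_t(P)$ in the direction $n$ is
$$w(t)=\sum_{k=1}^3 a_k|x_k(t)|, \qquad x_k(t)=u_k(t)\cdot n,\qquad x_1^2+x_2^2+x_3^2=1 .$$
Since $\Phi_t(P)$ stays in the layer, its closure lies between $\alpha$ and $\beta$, and therefore $w(t)\le c$ for all $t$. I will show that the set $T=\{t\in[0,1]: u_3(t)=\pm n\}$ is both open and closed in $[0,1]$. It is nonempty because $0\in T$, so connectedness of $[0,1]$ will give $T=[0,1]$.

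Closedness of $T$ is immediate from continuity. Openness is the main step, and it is a local estimate near the points $(x_1,x_2,x_3)=(0,0,\pm1)$. For $t$ close to some $t_0\in T$, the values $x_1,x_2$ are small and
$$w=a_1|x_1|+a_2|x_2|+c\sqrt{1-x_1^2-x_2^2}\ \ge\ c+a_1|x_1|+a_2|x_2|-c(x_1^2+x_2^2).$$
Because $a_1,a_2>0$, the right-hand side is strictly greater than $c$ as soon as $(x_1,x_2)\ne(0,0)$ and $|x_1|,|x_2|$ are small enough, for instance when $c|x_k|<a_k/2$. Together with $w(t)\le c$, this forces $x_1=x_2=0$, that is $u_3(t)=\pm n$, on a neighbourhood of $t_0$. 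I expect this to be the only genuine obstacle. Globally $w$ can be smaller than $c$: if $a_1<c$, the box could lie flat on its $a_2\times a_3$ face. So one cannot argue by a global inequality $w\ge c$, and the argument must rely on continuity together with this local strict minimum.

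Finally, once $u_3(t)=\pm n$ for all $t$, the two distinguished faces of $\Phi_t(P)$ are parallel to $\alpha$ and $\beta$ at distance $c$ from each other. Since the closure of $\Phi_t(P)$ lies in the closed layer, whose width is also exactly $c$, these two faces must lie on $\alpha$ and $\beta$, respectively. This holds at every moment $t$, which proves Lemma \ref{l1}.
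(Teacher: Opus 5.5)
Your proof is correct and uses the same idea as the paper: a small nonzero tilt makes the body strictly wider than the layer (the width gains a term linear in the tilt, while the height's contribution drops only quadratically), and continuity of the motion then rules out any tilt at all. The paper works with an inscribed right circular cylinder of diameter $r$, for which a tilt by $\varphi$ forces $r\le c\tan(\varphi/2)$, and then applies an intermediate-value argument to the tilt angle with $\varphi\to0^+$; you instead compute the box's own width $\sum_k a_k|x_k|$ and run an open--closed argument on $[0,1]$, which avoids the auxiliary cylinder.
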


The Appendix at the end of the paper collects some technical facts
used in the proofs of the results stated in Section 1.

\section{Moving rectangles in the corridors $\C_{00}$ and $ \C_{13}$ }

To prove Theorem \ref{t1} we first characterize the rectangles
from the sets $\R_{00}$  and  $ \R_{13}$.

\begin{prop}\label{p1} A rectangle with side lengths $c$ and $d,$ $c\ge d,$
belongs to the set $\R_{00}$ if and only if $d\le a\w b $ and
$cd\le ab.$
\end{prop}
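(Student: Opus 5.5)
The plan has two halves: a one-moment area estimate for necessity, and an explicit two-phase motion for sufficiency.

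Throughout, $\C_{00}$ is the half-plane $\{x>0\}$ with the wall $\{x=a\}$ removed except for the door $\D$. So a rectangle $R$ (sides $c\ge d$) must leave the strip $\{0<x<a\}$ and enter the half-plane $\{x>a\}$ through a gap of length $b$.

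\emph{Necessity of $d\le a\w b$.} At time $0$ the rectangle $R$ lies in a strip of width $a$. Its width in the horizontal direction is $c|\sin\psi|+d|\cos\psi|\ge d$, where $\psi$ is the angle of the long side with the vertical; hence $d\le a$. For $d\le b$, track the vertices of $\Phi_t(R)$ lying in $\{x<a\}$; their number drops from $4$ to $0$. By continuity there is a moment at which the line $\{x=a\}$ separates two adjacent vertices from the other two (a tie, when the line passes through a vertex, is handled by a limiting argument). At that moment $\{x=a\}\cap\Phi_t(R)$ is a chord joining two opposite sides. Such a chord has length at least the distance between those sides, hence at least $d$. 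It must also lie in $\D$, so its length is at most $b$, and therefore $d\le b$.

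\emph{Necessity of $cd\le ab$.} This is the step I expect to be the main obstacle. Consider the moment $t^*$ when the centre of $\Phi_t(R)$ crosses $\{x=a\}$. Let $L\le b$ be the length of the central chord and $w$ the horizontal width of $\Phi_{t^*}(R)$. A direct computation in the two cases (the chord meets the long sides, or it meets the short sides) gives $cd\le Lw$. The left half of the rectangle lies in $\{0<x<a\}$, so $w\le 2a$. This yields only $cd\le 2ab$, so the choice of moment must be sharpened. My first attempt would replace $t^*$ by a well-chosen $2$--$2$ moment from the previous step. There the part of $R$ left of the wall is a trapezoid whose vertical chords are all determined by the chord on $\D$, and I would bound its area by $a\cdot b$. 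Then I would show that one can choose the moment so that this left part carries at least the whole area $cd$ in the relevant direction. Concretely, I would look at the last moment when a short side still lies entirely in $\{x\le a\}$; the long sides then cross the door and the strip simultaneously. If that fails, I would use an intermediate-value argument on the area of the part in $\{x<a\}$, run in both rotation directions, since the paper warns that ``anti-rotation'' must be handled.

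\emph{Sufficiency.} I would build an explicit motion, assuming $d\le a\w b$ and $cd\le ab$. If $c\le a$, put the long side horizontal, slide the rectangle to the door and push it straight through; this uses only $d\le b$. Otherwise, start with the long side vertical (possible since $d\le a$) and place the rectangle against the wall $\{x=a\}$ below the door. Then tilt it continuously so that one long side passes through the corner $(a,0)$ and the opposite long side passes through the corner $(a,b)$, while translating it along its long direction into $\{x>a\}$. In this two-contact position the long sides are at distance $d\le b$ with slope fixed by $d$ and $b$. The horizontal extent of the part remaining in $\{x<a\}$ is at most $(c\,d)/b$ along the motion. The inequality $cd\le ab$ is exactly what keeps this part inside $\{x>0\}$, while $d\le a$ handles the initial placement. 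Finally, once the whole rectangle is in $\{x>a\}$, it is free. I would verify the containment by an elementary estimate of the leftmost vertex's $x$-coordinate along this one-parameter family.
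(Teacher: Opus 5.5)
Your proof of $d\le a\w b$ is fine; the sorted-vertex chord argument for $d\le b$ is more careful than the paper's one-line remark. The genuine gap is the necessity of $cd\le ab$, which is the heart of the proposition. You list strategies, but the concrete ones cannot work. Once $\Phi_t(R)$ protrudes through the door, its part in $\{x<a\}$ has area strictly less than $cd$. So bounding that trapezoid by $ab$ never gives $cd\le ab$, whatever moment you pick. The moment you single out is also the wrong one. At the last moment a short side lies in $\{x\le a\}$, the rectangle has essentially passed: in the paper's motion only the triangle formed by $A$, $C$ and the trailing vertex is left in the strip, and no side of length $c$ remains there. Rotation direction plays no role in this proposition.

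The missing idea is to enlarge the region rather than restrict it. Take the first moment at which at least two vertices lie in $\B_0$. If none of them is in $\{x>a\}$, a whole side lies on $\D$ and the rest of $R$ lies in the strip, so $cd\le ab$ at once. Otherwise exactly one vertex $K$ is in $\{x>a\}$ and a neighbour $L$ of $K$ lies on $\D$. Then the side $LM$ lies in the closed strip $\{0\le x\le a\}$. Let the line $KN$ meet $\{x=a\}$ at $K_0\in\overline{\D}$ and $\{x=0\}$ at $N_0$, and let the line $LM$ meet $\{x=0\}$ at $M_0$. The parallelogram $K_0LM_0N_0$ cut from the strip by these two parallel lines has all vertical chords equal to $|K_0L|\le b$, hence area $a|K_0L|\le ab$. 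Its area is also $|KL|\cdot|LM_0|\ge|KL|\cdot|LM|=cd$, because $M$ lies between $L$ and $M_0$. This is your trapezoid bound, applied to the whole parallelogram instead of to the part of $R$.

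Your sufficiency plan matches the paper's in spirit, but the tilt is the real difficulty and you leave it unspecified. It must happen while $R$ already protrudes through the door. For example, the position with long-side lines through $A$ and $C$ and $R\subset\{x\le a\}$ has its trailing vertex at distance at least $(c+\sqrt{b^2-d^2})\,d/b$ from the wall, which exceeds $a$ when $cd=ab$ and $d<b$.

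The paper handles $c>b$ as follows (if $c\le b$ one simply slides $R$ through with a long side along the door). Start with $KN$ on $\{x=a\}$ and $K=C$. Let $K$ run along the semicircle with diameter $AC$ in $\{x>a\}$, so that the line $KN$ passes through $A$ and the line $KL$ through $C$, until $L=C$. This is exactly your two-contact position, with trailing vertex $M$ at distance $cd/b\le a$ from the wall, after which one translates along $ML$.

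The paper only checks the end position. For the intermediate ones, let $\beta$ be the angle between $KN$ and the vertical. Then $\sin\beta\le d/b$, and $A$, $C$ lie on the sides $KN$, $KL$, so the chord of $R$ on the wall is $AC$. The distance from $M$ to the wall is
\[
d\cos\beta+c\sin\beta-b\sin\beta\cos\beta=\frac{cd}{b}-\Bigl(\frac{d}{b}-\sin\beta\Bigr)(c-b\cos\beta)\le\frac{cd}{b}.
\]
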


\n{\it Proof.} We first prove the \emph{only if part} of the
proposition. Let $A=(a,0)$ and $C=(a,b)$ be the ends of the
segment $\mathcal{D}$ and $R=KLMN$ be a rectangle with $|KL|=d$
and $|KN|=c$. Note that if $d>a$, then $R \nsubseteq \A_0$ and if
$d>b$, then it is not possible to move $R$ into $\B_0$ trough
$\D$. Hence $d\le a\w b $.

Suppose now that it is possible to move $R$ from $\A_0$ to $\B_0$
trough $\D$. Consider the first moment when at least two vertices
of $R$ lie in $\B_0$. Suppose first that at this moment there is
no vertex of $R$ lying in $\{x>a\}$. Then its side is lying on
the segment $\mathcal{D}$ and it is clear that $cd\le ab.$ The
other possibility is that there is only one vertex of $R$ in
$\{x>a\}$, say $K$. Then either $L$ or $M$, say $L$, lies on
$\mathcal{D}$. We can assume that the line $KN$ meets $\D$ and the
line $\{x=0\}$ at some points $K_0$ and $N_0$, respectively, and
let the line $LM$ meets $\{x=0\}$ at $M_0$ (Fig.~\ref{fig2A}).

\s

 \begin{figure}[!t]
 \begin{center}
      \begin{subfigure}[]{0.38\textwidth}
      \includegraphics[width=\textwidth]{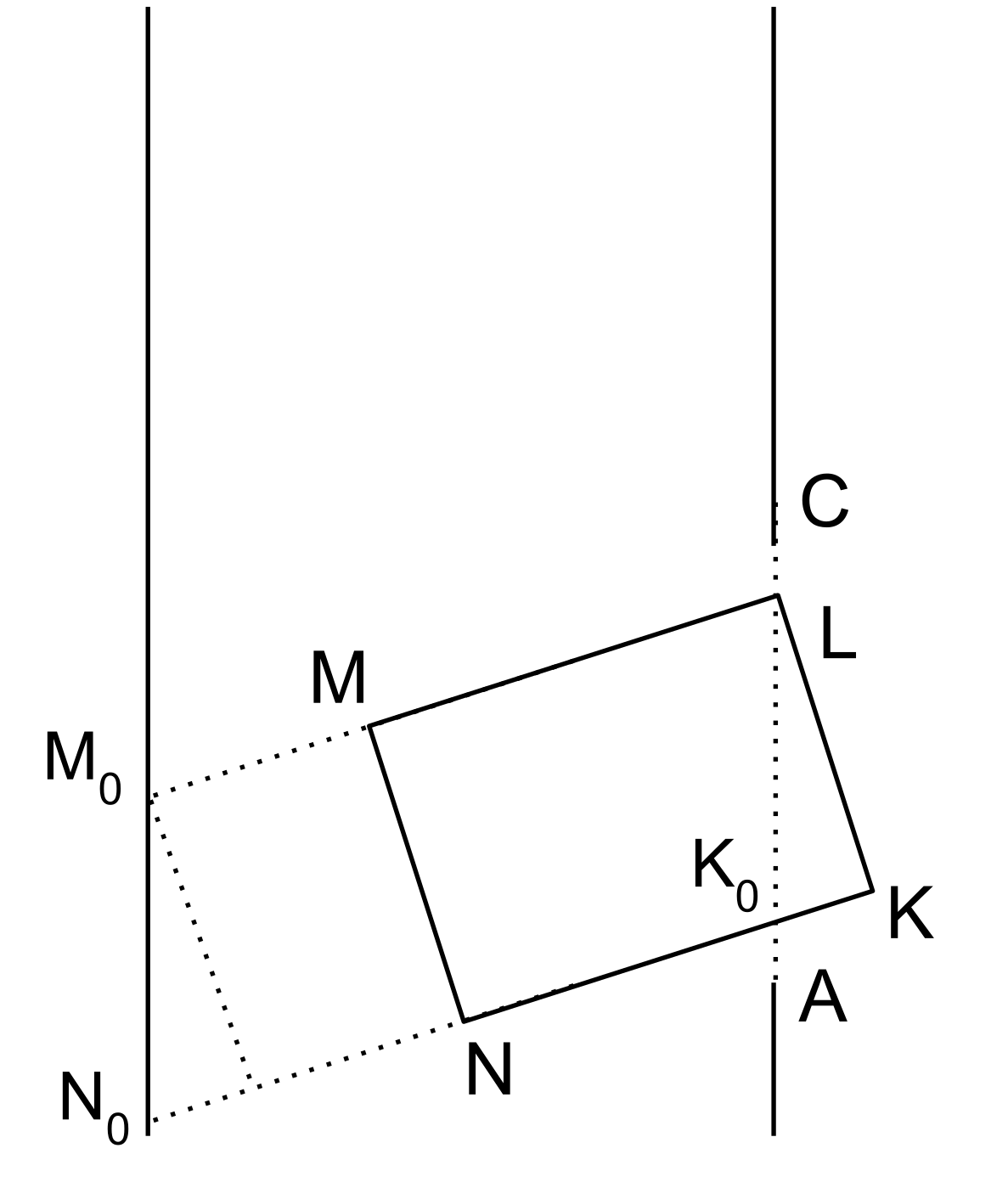}
      \caption{}
     \label{fig2A}
     \end{subfigure}
     \hspace{1cm}
     \begin{subfigure}[]{0.38\textwidth}
     \includegraphics[width=\textwidth]{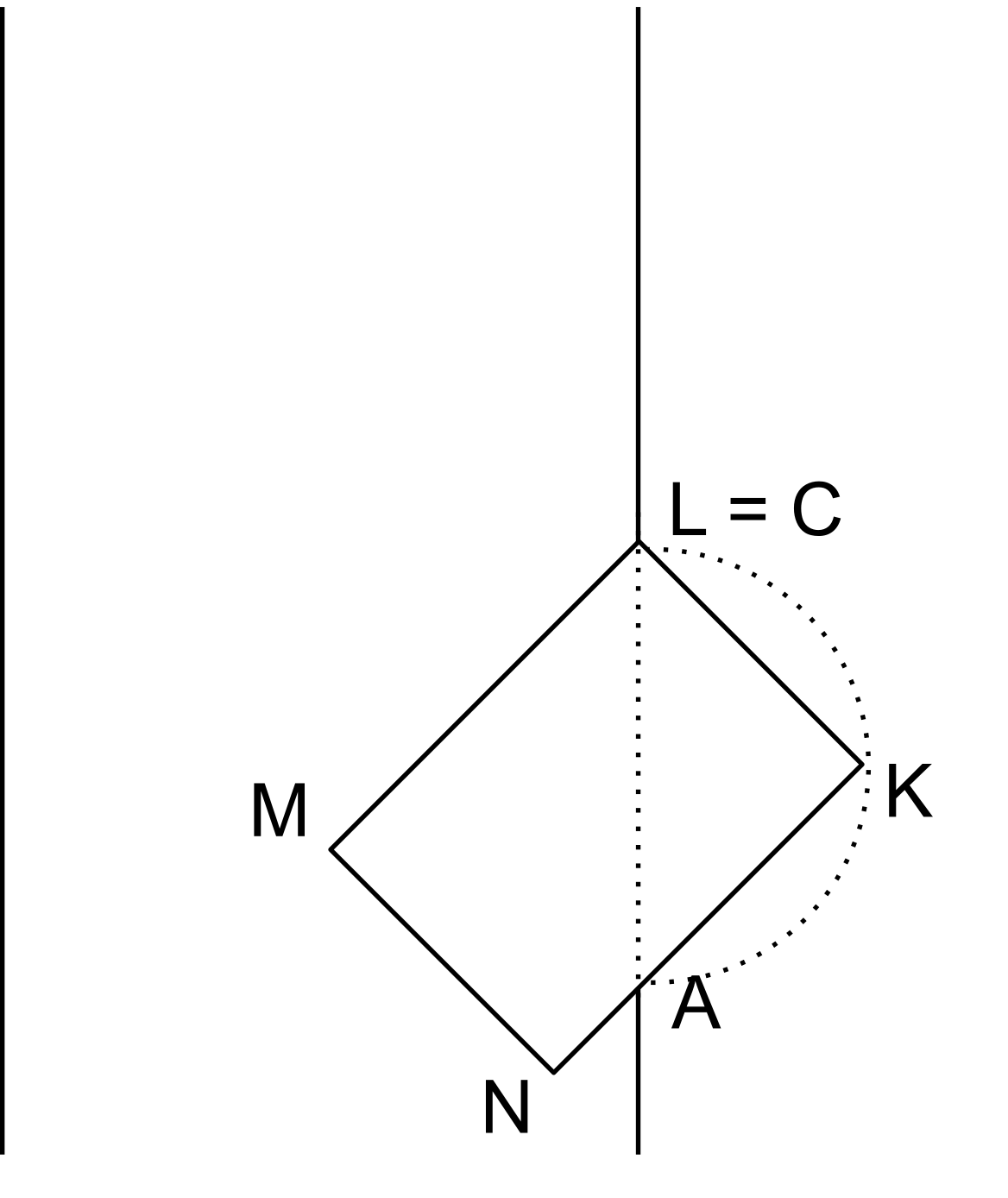}
     \caption{}
     \label{fig2B}
     \end{subfigure}
 \caption{The corridor $\C_{00}$.}
 \label{fig2}
 \end{center}
 \end{figure}

\s

Then
$$ab\ge a|K_0L|=S_{K_0LM_0N_0}\ge S_{KLMN}=cd $$
and the \emph{only if} part is proved.

\s

Conversely, let $c\ge d, d\le a\w b $ and $cd\le ab.$ Then it
is easy to see that for any position of $R$ in $\A_0$ one can move
it in a way that one of its longest sides, say $KN$, lies on the
line $\{x=a\}, K=C$ and $N$ is below the point $C$. If $c\le b$,
then one can move $R$ to $\B_0$ by a translation parallel to the
line $\{y=0\}$. Let now $c> b$. Then one moves $R$ such that the
vertex $K$ is moving on the semicircle with diameter $AC$, lying
in the half-plane $\{x>a\}$. Consider the moment when $|KC|=d ,$
i.e. $L=C$ (Fig.~\ref{fig2B}). We will show that then $M\in \A_0$
and hence one can move $R$ in $\B_0$ by a translation parallel to
its side $ML$. Indeed, it is easy to check that if $R$ is in the
above position, then the distance from $M$ to the line $\{x=a\}$
is equal to $cd/b$ and $M\in \A_0$ since $cd/b\le a$.\qed

\s

In the next proposition we characterize the rectangles that can
move around the corner of an $\mathrm{L}$-shaped corridor. This is
done in terms of the function $f_{(a,b,c)}$,  defined in
(\ref{1}).  Its geometric meaning is the following -- the value of
$f_{(a,b,c)}$ at $t\in \DD$ is equal to the distance from the
point $C(a,b)$ to the line with slope $-\tan t$ that cuts a
segment of length $c$ from the first quadrant. It is clear that
$f_{(a,b,c)}$ is a continuous function which is increasing in $a$
and $b$, and decreasing in $c$.

\begin{prop}\label{p2}  A rectangle with side lengths $c$ and $d ,
c\ge d , $ belongs to the set $\R_{13}$ if and only if $c\le
a\vee b$ and $ \ d\le a\w b,$ or $d\le m(a,b,c) .$
\end{prop}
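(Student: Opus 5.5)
We first fix the geometry. The corridor $\C_{13}$ is the standard $\mathrm{L}$-shaped corridor: the vertical arm $\A_1=\{0<x<a,\,y>0\}$ and the horizontal arm $\B_3=\{x>a,\,0<y<b\}$, glued along $\D$. Its inner corner is $C=(a,b)$ and its outer walls are $\{x=0\}$ and $\{y=0\}$. We prove sufficiency and necessity separately.

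\emph{Sufficiency.} There are two cases.

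\textbf{Case $c\le a\vee b$, $d\le a\w b$.} Assume $b\ge a$; the other case is symmetric. Translate $R$ down the vertical arm with its short side $d\le a$ horizontal, until it rests on $\{y=0\}$. Since $c\le b$, a horizontal translation then carries it through $\D$ into $\B_3$, because $d\le a\w b$ lets it fit.

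\textbf{Case $d\le m(a,b,c)$.} Use the standard rotation. Place a long side of $R$ on the segment of a line of slope $-\tan t$ whose part in the first quadrant has length $c$. That segment joins $(0,c\sin t)$ and $(c\cos t,0)$, so the rectangle has this segment as its outer long side. Let $t$ run from $\pi/2$ down to $0$ while $R$ stays on the inner side of the segment. The distance from $C$ to the line is $f_{(a,b,c)}(t)\ge m(a,b,c)\ge d$, so the opposite long side never crosses $C$. Hence $R$ stays inside $\C_{13}$ throughout. At $t=\pi/2$ we have $f=a\ge d$, so $R$ lies in $\A_1$; at $t=0$ we have $f=b\ge d$, so $R$ lies in $\B_3$.

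\emph{Necessity.} We show that if $R\in\R_{13}$ and $d>m(a,b,c)$, then $c\le a\vee b$. Note first that $d\le a\w b$ is forced, exactly as in Proposition \ref{p1}. The main tool is to track the angle $\theta(s)\in\mathbb{R}/\pi$ of the long side of $\Phi_s(R)$ along the motion.

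\textbf{Step 1: when $c>a\vee b$, the angle is locked near the arms.} If $c>a$, then $\Phi_0(R)\subset\A_1$ forces $\theta(0)$ into a small set near vertical: the rectangle must fit in a strip of width $a$, and $d\le a$. Likewise $\theta(1)$ lies near horizontal.

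\textbf{Step 2: an intermediate position.} By continuity there is a time $s$ at which $R$ is placed so that its outer long-side line has slope $-\tan t$ with $t\in\DD$. Being able to fit at that moment requires the line parallel to it at distance $d$ to avoid $C$. We would formalize this by taking $K$ to be the first vertex to enter $\{x>a\}$, as in the proof of Proposition \ref{p1}. We then compare $R$ with the "ladder" segment cut by the outer line from the first quadrant; that segment has length at least $c$. Since $f_{(a,b,c)}$ is decreasing in $c$, this gives $d\le f_{(a,b,c')}(t)\le f_{(a,b,c)}(t)$ for the actual intercept length $c'\ge c$.

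\textbf{Step 3: choosing $t$ and handling anti-rotation.} We must make sure the intermediate configuration actually realizes the minimizing angle $t_0$ of $f_{(a,b,c)}$, or at least some $t$ with $f(t)<d$, which is the contradiction we want. The intermediate value theorem gives that $\theta$ passes through every angle between vertical and horizontal. The difficulty is that $R$ may rotate "the wrong way": from vertical through slopes of positive tangent to horizontal. In that case the long side must at some moment be positioned across the corner with positive slope. There we would show that a segment of length $c>a\vee b$ cannot fit, because such a segment inside $\C_{13}$ with positive slope must lie essentially within one arm, and so has length at most $a\vee b$ (up to the width constraints). With anti-rotation excluded, the angle passes through $t_0$. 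At that moment every point of $R$ lies outside the open region beyond $C$, which yields $d\le f(t_0)=m(a,b,c)$ and contradicts the assumption.

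I expect this anti-rotation exclusion and the rigorous form of "$R$ lies between the outer line and $C$" at the critical angle to be the main obstacle. The first thing I would try is a first-moment argument on vertices entering $\{x>a\}$.
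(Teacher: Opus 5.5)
Your sufficiency argument is fine and matches the paper's Lemma~\ref{l2}. The necessity argument, however, breaks at Step~3, because the claim you propose there is false.

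A rectangle with $c>a\vee b$ \emph{can} sit in $\C_{13}$ with its long sides of positive slope, straddling the corner. Take $a=b=1$ and the $\frac65\times\frac3{10}$ rectangle with one short side joining $(0,\frac{3\sqrt2}{20})$ and $(\frac{3\sqrt2}{20},0)$ and long sides in direction $(1,1)$. It lies in the open quadrant and in $\{x+y<2\}$, so it misses $\{x\ge1,\,y\ge1\}$ and lies in $\C_{13}$. Whole anti-rotations with $c>a\vee b$ also exist. By Lemma~\ref{l3}, one is possible exactly when $c\le\min_{[t_1,t_2]}f_{(a,b,d)}$, where $t_1=\arccos(b/c)$ and $t_2=\arcsin(a/c)$; for example $a=b=1$, $c=\frac65$, $d\le\frac65(1-\frac{\sqrt{11}}5)\approx0.40$.

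So anti-rotation cannot be excluded by a fitting argument. The missing idea is to compare the two constraints and show that the anti-rotation one implies the rotation one. Write the anti-rotation constraint as $d\le\min_{[t_1,t_2]}g_{(a,b,c)}$ with $g_{(a,b,c)}(t)=(a\sin t+b\cos t-c)/(\sin t\cos t)$. The paper then argues as follows:
\begin{itemize}
\item During an anti-rotation the short side is at some moment perpendicular to $OC$, which gives $c^2\le a^2+b^2$.
\item The function $h(t)=f_{(a,b,c)}(\pi/2-t)$ satisfies $h\ge g_{(a,b,c)}$ when $c\ge a\vee b$; this reduces to $(1-\sin t)(1-\cos t)\ge0$.
\item $h$ has a unique minimizer $t_*$, and $h'(t_1)\le0\le h'(t_2)$ (from $c^2\le a^2+b^2$) places $t_*$ in $[t_1,t_2]$.
\end{itemize}
Hence $m(a,b,c)=h(t_*)\ge g_{(a,b,c)}(t_*)\ge\min_{[t_1,t_2]}g_{(a,b,c)}\ge d$. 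Nothing in your plan replaces this comparison.

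There is also a fixable flaw in Step~2. For an arbitrary position at the critical angle, the inequality $d\le f_{(a,b,c')}(t)$ is false. If $R$ sits high up the vertical arm with steep long sides, then $C$ lies on the origin side of the outer line and $f_{(a,b,c')}(t)<0$. The remedy, as in the paper's proof of Lemma~\ref{l2}, is to translate $R$ by a vector with nonpositive coordinates until its outer long side has its endpoints on the two axes. This keeps $R$ in $\C_{13}$, because the open quadrant minus $\{x\ge a,\,y\ge b\}$ is stable under such translations as long as one stays in the quadrant. After that, $d\le f_{(a,b,c)}(t)$ is immediate. The first-vertex argument you suggest does not by itself produce a position at the minimizing angle.
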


\n{\it Proof.} Note as above that a rectangle $R$ with side
lengths $c$ and $d , c\ge d , $ can lie in  $\A_1$ (resp. $\B_3$)
precisely when $d\le a$ (resp. $d\le b.$) Moreover, it can be
moved in $\A_1$ (resp. $\B_3$) to a position where two of its
sides lie on the coordinate axes. Such positions of $R$ are called
{\it good}. Hence we can assume that any movement of $R$ from
$\A_1$ into $\B_3$ around the corner $C$ starts and ends at good
positions and if $c\ge a\vee b$ a  side of $R$ with length $c$
lies on $Oy^+$ at its initial position and on $Ox^+$ at the final
one. (Note that that $R$ can be moved in the corridor $\A_1$ so
that it lies on $Oy^+$ on sides of lengths $c$ and $d$ if and only
if $c^2+d^2\le a^2$.)

Let $\overrightarrow{e_1}=(1,0)$ and
$\overrightarrow{e_2}=(0,1)$ be the unit coordinate vectors and
let $KLMN $ be the initial position of $R$ with $M=(0,0)$ and
$\overrightarrow{ML}=c\overrightarrow{e_2}$. We will say that a
movement of $R$ around $C$ is a \emph{rotation} (resp.
\emph{anti-rotation}) if $\overrightarrow{ML} =
-c\overrightarrow{e_1}$ (resp. $\overrightarrow{ML} =
c\overrightarrow{e_1}$) at its final position.

\begin{lem}\label{l2} A rectangle $R\ss \A_1$ with side lengths
$c$ and $d$ (not necessarily $c\ge d$) can be moved into $\B_3$ by
a rotation around the corner $C$ if and only if $d\le m(a,b,c).$
\end{lem}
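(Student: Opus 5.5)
We prove the two directions separately. In the rotation, the side $ML$ of length $c$ turns from the direction $\overrightarrow{e_2}$ to $-\overrightarrow{e_1}$, while the opposite side $KN$ stays on the side of $ML$ nearer to the corner $C$.

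\emph{Sufficiency.} Suppose $d\le m(a,b,c)$. We move $R$ so that the endpoints of $ML$ slide along the two positive half-axes: $M=(s,0)$ and $L=(0,c\sin t)$ with $s=c\cos t$, while $t$ decreases from $\pi/2$ to $0$. Here $t$ is the angle described in the geometric interpretation of $f_{(a,b,c)}$: the line $ML$ has slope $-\tan t$ and cuts off a segment of length $c$ in the first quadrant. The rectangle then lies in the closed first quadrant, on the side of the line $ML$ facing $C$, inside the strip of width $d$ along that line. Its far side $KN$ lies on a line parallel to $ML$ at distance $d$. Since $d\le m(a,b,c)\le f_{(a,b,c)}(t)$, this line separates the rectangle from $C$ (or passes through $C$ when equality holds). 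A point of the first quadrant with $x\ge a$ and $y\ge b$ would lie beyond $C$ relative to the line $ML$, so the open rectangle avoids $\{x>a,y>b\}$. It therefore stays in $\C_{13}$, and also $\{x<0\}\cup\{y<0\}$ is avoided. At $t=\pi/2$ the rectangle is in the good position in $\A_1$, and at $t=0$ it is in the good position in $\B_3$. One checks that $d\le m(a,b,c)$ implies $d\le a\w b$, using $f(0)=b$ and $f(\pi/2)=a$, so both endpoint positions are legal. Since any position in $\A_1$ can be moved to a good one, this proves sufficiency.

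\emph{Necessity.} Suppose $R$ is moved by a rotation. Let $\theta(\tau)$ be the continuous angle of $\overrightarrow{ML}$. By continuity of the motion, $\theta$ passes from $\pi/2$ to $\pi$ in the rotation case (modulo the right choice of branch), so every intermediate direction $t\in\DD$ is attained. Fix $t$ and a moment at which the direction of $ML$ corresponds to $t$. The key claim is that at this moment the whole rectangle lies in $\C_{13}$, which is contained in the first quadrant minus the open quadrant $\{x>a,y>b\}$. Consider the lines parallel to $ML$ supporting $R$. The side $ML$ has length $c$ and lies in the first quadrant, so the line through it, in the direction with slope $-\tan t$, must be at least as far from the origin as the line cutting off exactly length $c$. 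Otherwise the segment $ML$ would not fit between the axes in that direction, since chords of the quadrant in a fixed direction have length proportional to their distance from $O$.

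The opposite side $KN$, at distance $d$ from $ML$ toward $C$, must not cross into $\{x>a,y>b\}$. Here the orientation matters: in a rotation, $KN$ is on the $C$-side of $ML$. Since $KN$ has length $c$ and the region between the line $ML$ and the corner is a triangle-like region, the strip of width $d$ beyond $ML$ must fit before $C$. This gives $d+\mathrm{dist}(O,\text{line }ML)$ versus $\mathrm{dist}(O,C\text{-projection})$, i.e. $d\le f_{(a,b,c)}(t)$, after pushing $ML$ outward only decreases the available room. Taking the minimum over $t$ yields $d\le m(a,b,c)$.

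The main obstacle is making this last step rigorous. The point is that the segment $KN$ of length $c$, in direction $t$, cannot lie in the first quadrant beyond $C$'s line without either meeting $\{x>a,y>b\}$ or leaving the quadrant. I would argue this by contradiction: if the line of $KN$ is farther from $O$ than the line through $C$, then its chord within the first quadrant minus the forbidden region splits into two pieces lying in $\{x<a\}$ and $\{y<b\}$. Since $KN$ is connected, it lies in one of these pieces. Then $ML$, which is parallel to $KN$ and on the $O$-side of it, is shorter than $c$ there, or else $R$ exits an axis. A cleaner alternative is to use the vertex-based argument of Proposition~\ref{p1}: take the moment when the direction of $ML$ makes $C$ the worst point, and compute areas or projections to obtain $d\le f_{(a,b,c)}(t)$.
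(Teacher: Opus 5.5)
Your sufficiency argument is the paper's ladder-sliding motion $R_t$, and it is fine. Your separation argument in fact excludes the closed set $\{x\ge a,\ y\ge b\}$, which is what membership in $\C_{13}$ requires.

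The necessity part has a genuine gap. You want $\operatorname{dist}(O,\ell_{ML})+d\le a\sin t+b\cos t$ at an arbitrary moment when the $c$-sides have slope $-\tan t$, and this is false at such a moment, because nothing forces $R$ to be near the corner then. If $c\cos t+d\sin t<a$ (e.g. $a=b=1$, $c=d=1/2$, $t=\pi/4$), an admissible rotation may carry $R$ at this inclination far up the vertical arm, where the line of $KN$ lies well beyond $C$. Your contradiction argument fails for the same reason. If $KN$ lies in the piece of its chord inside $\{x<a\}$, the whole rectangle just sits in the vertical strip, $ML$ is not forced to be shorter than $c$, and no axis is crossed, so there is nothing to contradict. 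The remark that ``pushing $ML$ outward only decreases the room'' presupposes the inequality being proved. The labelling/orientation is also irrelevant to containment: the rotation hypothesis is used only to guarantee that the $c$-sides sweep every negatively sloped direction.

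The missing idea, which is the core of the paper's proof, is a reduction to a canonical position by translations. If $(x,y)\in\C_{13}$ and $0<x'\le x$, $0<y'\le y$, then $(x',y')\in\C_{13}$. So at the chosen moment you may translate $R$ parallel to $Ox$ and then parallel to $Oy$ until the endpoints of its near $c$-side lie on $Oy^+$ and $Ox^+$. In this position $\operatorname{dist}(O,\ell_{ML})=c\sin t\cos t$ exactly. Hence if $d>f_{(a,b,c)}(t)$ and the foot of the perpendicular from $C$ to $\ell_{ML}$ lies on the segment $ML$, the open rectangle meets $\{x\ge a,y\ge b\}$, a contradiction.

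The foot condition needs a line of justification, and there are two ways to supply it:
\begin{enumerate}
\item Check directly that when the foot falls outside $ML$ one has $f_{(a,b,c)}(t)>a\w b\ge d$.
\item Run the argument only at a minimizer $t^*$ of $f_{(a,b,c)}$. If $t^*$ is interior, $f'_{(a,b,c)}(t^*)=0$ puts the foot at distance $c\cos^2t^*$ from the endpoint on $Oy$, hence on $ML$. If $t^*\in\{0,\pi/2\}$, then $m(a,b,c)\in\{a,b\}$, and $d\le a\w b$ follows from the initial and final positions.
\end{enumerate}
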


\begin{proof} We first prove the \emph{only if part} of the
lemma.
Let $R\ss \A_1$ be a rectangle with side lengths $c$
and $d$ that can move in $\B_3$ by a rotation around
the corner $C$. It is clear that $d\le a\w b $ and we have to
prove that if $c>a\vee b ,$ then $d\le m(a,b,c).$
Let $R=KLMN$ be a rectangle in a good position with vertices as
above. For any $t\in\DD^0$ denote by $A_t\in Ox^+$ and $B_t\in
Oy^+$ the points such that $C\in A_tB_t$ and $\angle OA_tB_t=t.$
Consider the moment when the sides of $R$ with length $c$ are
parallel to $A_tB_t$ and move  $R$ by translations parallel to
$Oy$ and  $Ox$ so that $L\in Oy^+$ and $M\in Ox^+$. Consider the
closer to $O$ tangent  to $k(C,d)$ which is parallel $A_tB_t$ and
denote by $A$ and $B$ be its intersection points with $Ox^+$ and
$Oy^+$, respectively. It is clear that $|LM|\leq |AB|$ since
otherwise it follows by convexity that $R\subsetneq \C_{13}$.
Hence $d\leq f_{(a,b,c)}(t)$ for any $t\in\DD^0$ and therefore
$d\le m(a,b,c).$

We now prove the \emph{if part} of the lemma.
Recall that $R$ is
in a good initial position. Also, the inequality  $d\le m(a,b,c)$
implies $d\le f_{(a,b,c)}(0)=b$ and $d\le f_{(a,b,c)}(\pi/2)=a$.
Thus if $c\le a\vee b ,$ then $R\ss\B_3$. So, we have to show that
if $c\ge a\vee b$ and $d\le m(a,b,c),$ then we can move $R$ to
$\B_3$ by a rotation around $C$.
For $t\in \DD$ denote by $R_t$ the rectangle with vertices
$L_t=(0,c\cos t), M_t=(c\sin t,0), N_t=(d\cos t+c\sin t, d\sin t),
K_t=(d\cos t, d\sin t+c\cos t).$ The inequality $d\le m(a,b,c)$
means that $d\le f_{(a,b,c)}(t)$ for any $t\in\DD$ and therefore
the distance from $C$ to the line $L_tM_t$ is $\ge d$. Hence $R$
moves from $\A_1$ into $\B_3$ by rotation around $C$.
\end{proof}

For any $c\ge a\vee b$ we set $t_1=\arccos(b/c), t_2=\arcsin(a/c)
, \ \widetilde{\DD}=[t_1,t_2] $ and $\widetilde{m}(a,b,d)=
\min_{\widetilde{\DD}}f(a,b,d) .$

\begin{lem}\label{l3} A rectangle $R\ss \A_1$ with side lengths
$c$ and $d,$ $c\ge a\vee b,$ can be moved into $\B_3$ by an
anti-rotation around the corner $C$ if and only if $c\le
\widetilde{m}(a,b,d).$
\end{lem}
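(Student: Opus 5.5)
The plan is to follow the scheme of Lemma \ref{l2}: parametrize the motion by the direction of the long sides of $R$, and reduce both directions of the lemma to a pointwise comparison with the function $f_{(a,b,d)}$ from (\ref{1}). The roles of $c$ and $d$ are swapped relative to the rotation case. In an anti-rotation the vector $\overrightarrow{ML}$ turns from $c\overrightarrow{e_2}$ to $c\overrightarrow{e_1}$, so the short side of $R$ is the one that must pass the corner. The obstruction is therefore the distance from $C$ to a segment of length $d$ spanning the outer walls, namely $f_{(a,b,d)}(t)$, which must be at least $c$.

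\emph{Only if part.} Suppose $R$ is moved from a good position in $\A_1$ into $\B_3$ by an anti-rotation.
\begin{enumerate}
\item By continuity, for every $t\in\DD$ there is a moment at which $\overrightarrow{ML}$ makes angle $t$ with $Ox^+$.
\item I then check that the restriction $t\in\widetilde{\DD}=[t_1,t_2]$ is exactly the range of angles at which $R$ cannot lie entirely in either arm. For such $t$, the projection of the long side onto $Oy$ is $c\sin t\ge c\sin t_1$, and I expect this to exceed $b$ so that $R$ cannot lie in $\B_3$. Symmetrically, the projection onto $Ox$ is $c\cos t\ge c\cos t_2$, and I expect this to exceed $a$ so that $R$ cannot lie in $\A_1$. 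This requires $c\ge a\vee b$; I would verify the identification carefully, since the exact values of $t_1,t_2$ depend on it.
\item Hence at such a moment $R$ straddles the corner. Translating $R$ parallel to $Ox$ and $Oy$ within $\C_{13}$, I push its short side $MN$ (of length $d$) so that its endpoints lie on $Ox^+$ and $Oy^+$, and the long sides extend from it away from $O$.
\item The long sides then leave the quadrant region near $O$ and pass the corner $C$. By convexity of $R$, $C$ cannot lie in the interior of $R$. Since $C$ lies between the two long sides (otherwise $R$ would fit in one arm), the far short side must be no farther from the line $MN$ than $C$ is.
\item The distance from $C$ to the line $MN$ is $f_{(a,b,d)}(t)$. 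This gives $c\le f_{(a,b,d)}(t)$ for all $t\in\widetilde{\DD}$, that is, $c\le\widetilde m(a,b,d)$.
\end{enumerate}
The main point to check in steps 3 and 4 is that the translation keeps $R$ inside $\C_{13}$, and that $C$ lies on the correct side of the long sides. This is where the angle restriction to $\widetilde{\DD}$ enters.

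\emph{If part.} Suppose $c\le\widetilde m(a,b,d)$. I would write down an explicit family of rectangles $R_t$, analogous to the family in the proof of Lemma \ref{l2}, with the short side having endpoints $(d\cos t,0)$ and $(0,d\sin t)$ and the long sides of length $c$ perpendicular to it, pointing away from $O$. The motion has three phases.
\begin{enumerate}
\item Starting from the good position, translate and slide $R$ within $\A_1$ until it reaches $R_{t_2}$, using that $c\cos t_2\le a$.
\item Let $t$ run from $t_2$ down to $t_1$. The hypothesis says that the distance from $C$ to the line through the short side is at least $c$, so the far short side stays on the near side of $C$. Together with the outer walls $x\ge0$, $y\ge0$, this gives $R_t\subset\C_{13}$.
\item At $t_1$ the rectangle fits in $\B_3$. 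A translation along $Ox$ then brings it to a good final position with $\overrightarrow{ML}=c\overrightarrow{e_1}$.
\end{enumerate}

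The main obstacle is the endpoint analysis: verifying that outside $\widetilde{\DD}$ the rectangle genuinely fits in an arm, so that no further constraint arises there, and that the orientation conventions make the family $R_t$ an anti-rotation rather than a rotation. I would settle this first by computing the vertices of $R_{t_1}$ and $R_{t_2}$ explicitly.
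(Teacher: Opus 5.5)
Your strategy is the paper's: at each angle push $R$ against the outer walls and compare with $f_{(a,b,d)}$, and for sufficiency use the explicit family $R_t$. However, you have the meaning of $\widetilde{\DD}$ reversed, and step~2 of your \emph{only if} part is false.

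Let $t$ be the angle between the short side and $Ox$, as in your family $R_t$, whose long side is $c(\sin t,\cos t)$. Then $t\in[t_1,t_2]$ means exactly $c\cos t\le b$ and $c\sin t\le a$. So on $\widetilde{\DD}$ the projections of the long side are \emph{at most} the widths. It is for $t\notin\widetilde{\DD}$ that the long side alone confines $R$ to one arm.

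For $t\in\widetilde{\DD}$ the rectangle can perfectly well lie in one arm. For example, with $a=b=1$, $c=1.2$, $d=0.1$, the rectangle $R_{t_1}$ has width $\sqrt{c^2-b^2}+db/c\approx 0.75<a$, and $C$ is not between its long sides. So your steps~3--5 as written give nothing at such angles, although $\widetilde m$ needs $c\le f_{(a,b,d)}(t)$ at every $t\in\widetilde{\DD}$. The missing step must use the assumption $c>f_{(a,b,d)}(t)$. There are two ways to supply it:
\begin{enumerate}
\item[(a)] Argue by contradiction. If $R_t$ lay in the vertical arm, then $c\sin t+d\cos t\le a$. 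Together with $c\cos t\le b$ this gives $f_{(a,b,d)}(t)\ge c\sin^2t+b\cos t\ge c$, and the horizontal arm is symmetric. Hence $c>f_{(a,b,d)}(t)$ does force straddling, and your convexity step then applies.
\item[(b)] Argue directly. In the position $R_t$ the far vertices satisfy $x(L_t)=c\sin t\le a$ and $y(K_t)=c\cos t\le b$. So if $c>f_{(a,b,d)}(t)$, the far side $L_tK_t$ contains the whole chord that $\{x\ge a,\,y\ge b\}$ cuts from its line, and $R_t$ meets $\{x>a,\,y>b\}$.
\end{enumerate}
Pushing $R$ to the walls is legitimate, because $\C_{13}$ is stable under decreasing either coordinate inside the open first quadrant.

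You also need $c^2\le a^2+b^2$, i.e.\ $t_1\le t_2$. The paper gets this from the moment when the short side is perpendicular to $OC$. Without it $\widetilde{\DD}$ may be empty, and the condition $c\le\widetilde m(a,b,d)$ says nothing.

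The same inversion affects the \emph{if} part. For $c>a\vee b$, $R_{t_2}$ has width $a+d\cos t_2>a$, so it does not lie in $\A_1$. Likewise $R_{t_1}$ has height $b+d\sin t_1>b$, so it does not lie in the horizontal arm. The motion has to run in this order:
\begin{enumerate}
\item[(1)] $R_0\to R_{t_1}$ inside $\A_1$;
\item[(2)] $R_{t_1}\to R_{t_2}$ using the distance condition (your middle phase, which is correct);
\item[(3)] $R_{t_2}\to R_{\pi/2}$ inside the horizontal arm.
\end{enumerate}
Both arm phases are rotations, since a translation along $Ox$ cannot change the angle. They need the full width bound $c\sin t+d\cos t\le a$ for $t\in[0,t_1]$, not just a bound on the projection of the long side. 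The paper gets this from two facts: $x(t)=d\cos t+c\sin t$ is monotone on $[0,t_1]$, and $x(t_1)\le a$. The latter is precisely the hypothesis $c\le f_{(a,b,d)}(t_1)$ rewritten using $\cos t_1=b/c$. Symmetrically, $c\cos t+d\sin t\le b$ on $[t_2,\pi/2]$ follows from the hypothesis at $t_2$.
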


\begin{proof}
We first prove the \emph{only if part} of the lemma. Let a
rectangle $R\ss \A_1$ with side lengths $c$ and $d,$ $c\ge a\vee
b,$ can be moved into $\B_3$ by an anti-rotation around the corner
$C$. Note that for any $t\in \DD$ there is a moment when the angle
between a side of $R$ with length $d$ and $Ox$ is $t$. In
particular, at some moment such a side of $R$ is perpendicular to
$OC$ and therefore $c^2\le a^2+b^2$ which implies that $t_1\le
t_2$. We note that for $t\in [0,t_1]$ the rectangle $R$ lies in
the corridor $\A_1$, whereas for $t\in [t_2,\pi/2]$ it lies in the
corridor $\B_3$. Hence it is enough to consider only the positions
of $R$ where $t\in \widetilde{\DD}$. For any such $t$ there is a
moment when the sides of $R$ with length $d$ are parallel to the
tangent to the circle $k(C;c)$ with slope $-\tan t$ and we
conclude as in the proof of the \emph{only if part} of Lemma
\ref{l2} that $c\le \widetilde{m}(a,b,d)$.

To prove the \emph{if part } of the lemma suppose that $c\le
\widetilde{m}(a,b,d)$ and note that this inequality can be written
as
\begin{equation}\label{2}
d\le g_{(a,b,c)}(t)=\frac{a\sin t+b\cos t-c}{\sin t\cos t} .
\end{equation}
It is easy to check that $g_{(a,b,d)}(t)$ is the length of the
segment which the tangent to the circle $k(C;d)$ and slope $-\tan
t$ cuts from the first quadrant. In particular, we have from
(\ref{2}) that
$$ d\le g_{(a,b,c)}(t_1)=\frac{c(a-\sqrt{c^2-b^2)}}{b}\leq a $$
and
$$ d\le g_{(a,b,c)}(t_2)=\frac{c(b-\sqrt{c^2-a^2)}}{a}\leq b $$
since $c\ge a\vee b .$

Let now $R=KLMN$ be a rectangle with side lengths $c$ and $d,$
$c\ge a\vee b,$ which is in a good position with $M=O(0,0)$ and
$\overrightarrow{ML} = c\overrightarrow{e_2} $. For any $t\in
\widetilde{\DD}$ consider the rectangle $R_t=K_tL_tM_tN_t$ with
vertices $K_t=(d\cos t+c\sin t, c\cos t) , L_t=(c\sin t, c\cos
t+d\sin t) , M_t=(0,d\sin t) , N_t=(d\cos t,0) $. It is easy to
check that the function $x(t)=d\cos t+c\sin t$ is increasing on
the segment $[0,t_1]$ since $d\le a$ and $x(t_1)\le a $ since
$c\le f_{(a,b,d)}(t_1)$. Hence $K_t\in \A_1$ for any $t\in
[0,t_1]$ which implies that $R_t \ss \A$ for $t\in [0,t_1]$.
Similarly $R_t \ss \B$ for $t\in [t_2, \pi/2)$. It follows also
that $R_t \ss \C$ for $t\in [t_1,t_2],$ since the inequality $c\le
f_{(a,b,d)}(t) $ means that the point $C$ is at a distance at
least $d$ from the line $M_tN_t$.
\end{proof}

\begin{rem}\label{r1} The above proof easily implies that Lemma \ref{l3}
remains also true if $c<b$ or/and $c<a$, replacing $t_1$ by $0$
or/and $t_2$ by $\pi/2$ and adding the condition $ \ d\le a\w b.$
\end{rem}

We are now ready to prove Proposition \ref{p2}. It follows from
Lemma 1 and Lemma 2 that it is enough to prove that if $c> a\vee b
$ and a rectangle in $\A_1$ with side lengths  $c$ and $d, c\ge d
,$ can be moved into $\B_3$ by anti-rotation around the corner
$C$, then the same can be done by a rotation around $C$. In other
words, if $c\le \widetilde{m}(a,b,d)$, then
 $d\le m(a,b,c).$ Writing  the first inequality in the form
$d\le \min_{\widetilde{\DD}}g_{(a,b,c)}(t) ,$ where $g_{(a,b,c)}$
is the function defined by (\ref{2}), we have to prove that
\begin{equation}\label{3}
\min_{\DD}f_{(a,b,c)}(t) \ge \min_{\widetilde{\DD}}g_{(a,b,c)}(t) .
\end{equation}
To do this consider the function
$$ h(t)= f_{(a,b,c)}(\pi/2-t)= a\cos t+b\sin t-c\sin t\cos t,\ t\in\DD.$$
It is clear that $\min_{\DD}f_{(a,b,c)}(t)=\min_{\DD}h(t).$ In
addition, it follows easily from $c\ge a\vee b $ that $h(t)\ge
g(t)$ for any $t\in\DD$. On the other hand the function
$f_{(a,b,c)}(t)$, and hence $h(t)$, has a unique minimum in the
interval $\DD$ (see $\textbf{(A)}$ in Appendix) and let that of
$h(t)$ is attained at $t_{\ast}\in \DD$. Direct computations show
that

$$h'(t_1) = \frac{\sqrt{c^2-b^2}(\sqrt{c^2-b^2}- a)}{c} \ ,
\ h'(t_2) = \frac{\sqrt{c^2-a^2}(b-\sqrt{c^2-a^2})}{c} .$$ Since
$c^2\le a^2+b^2$ (see the proof of Lemma 2) we conclude that
$h'(t_1)\le 0 \le h'(t_2)$, i.e. $t_{\ast}\in \widetilde{\DD}$.
Then
$$\min_{\DD}f_{(a,b,c)}(t)= \min_{\DD}h(t)=h(t_{\ast})\ge g(t_{\ast})\ge
\min_{\widetilde{\DD}}g(t)$$
and inequality (\ref{3}) is proved. \qed

\begin{cor}\label{cr 4} Let $a=b=1$ and $R\ss\A_1$ be a rectangle with side
lengths $c$ and  $d.$ Then $R$ can be moved into $\B_3$ around the
corner $C$ by:

(i) a rotation if and only if $c\le 2(\sqrt2-1)$ and $d\le 1,$ or
$2(\sqrt2-1)\le c\le 2\sqrt2$ and $d\le\sqrt{2}-c/2;$

(ii) an anti-rotation if and only if $c\le\sqrt2-1/2$ and $d\le 1,$ or
$\sqrt2-1/2\le c\le\sqrt6-\sqrt2$ and $d\le 2\sqrt2-2c,$ or
$\sqrt6-\sqrt2\le c\le\sqrt2$ and $d\le c(1-\sqrt{c^2-1}).$
\end{cor}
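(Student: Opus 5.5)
The plan is to specialize Lemma~\ref{l2}, Lemma~\ref{l3} and Remark~\ref{r1} to $a=b=1$, and to compute the two minima $m(1,1,c)$ and $\widetilde m(1,1,d)$ explicitly. In both cases I substitute $s=\sin t+\cos t$, so that $\sin t\cos t=(s^2-1)/2$. As $t$ runs over $\DD$, the variable $s$ runs over $[1,\sqrt2]$, taking the value $\sqrt2$ at $t=\pi/4$, and $s$ is symmetric with respect to $\pi/4$.

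\emph{(i) Rotation.} By Lemma~\ref{l2} (which needs no assumption $c\ge d$), rotation is possible iff $d\le m(1,1,c)$. In the variable $s$,
$$f_{(1,1,c)}(t)=s-\tfrac c2(s^2-1),$$
which is concave in $s$. Hence its minimum over $[1,\sqrt2]$ is attained at an endpoint, and $m(1,1,c)=\min\{1,\sqrt2-c/2\}$. The minimum equals $1$ exactly when $c\le 2(\sqrt2-1)$, and otherwise equals $\sqrt2-c/2$. The latter is positive only for $c<2\sqrt2$, which gives the range $c\le 2\sqrt2$ stated in (i).

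\emph{(ii) Anti-rotation.} The condition is $d\le\min g_{(1,1,c)}$. The interval of minimization is $\widetilde\DD=[t_1,t_2]$ if $c\ge1$ (Lemma~\ref{l3}), and $\DD$ together with the extra condition $d\le1$ if $c<1$ (Remark~\ref{r1}). Moreover $c\le\sqrt2$, since $c^2\le a^2+b^2$ by the proof of Lemma~\ref{l3}. In the variable $s$,
$$g_{(1,1,c)}=\varphi(s)=\frac{2(s-c)}{s^2-1}.$$
The sign of $\varphi'$ is that of $-s^2+2cs-1$, whose roots are $c\pm\sqrt{c^2-1}$ when $c\ge1$, and which is negative everywhere when $c<1$. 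The smaller root is $\le1$. So on $s\ge1$ the function $\varphi$ is first increasing and then decreasing, or just decreasing, and its minimum over any $s$-interval is attained at an endpoint. This endpoint reduction is the key step.

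For $c<1$ the $s$-range is $[1,\sqrt2]$, and $\varphi(s)\to+\infty$ as $s\to1^+$. Hence the minimum is $\varphi(\sqrt2)=2\sqrt2-2c$. Combined with $d\le1$, this gives the first two cases of (ii), since $2\sqrt2-2c\ge1$ iff $c\le\sqrt2-1/2$.

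For $1\le c\le\sqrt2$ the $s$-range is $[s_0,\sqrt2]$, where $s_0=s(t_1)=(1+\sqrt{c^2-1})/c$. The two candidates are $\varphi(\sqrt2)=2\sqrt2-2c$ and $\varphi(s_0)=g(t_1)=c(1-\sqrt{c^2-1})$, the latter being the formula from the proof of Lemma~\ref{l3}.

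The main obstacle is the comparison of these two candidates. I expect them to cross exactly at $c=\sqrt6-\sqrt2$, which I would verify as follows. At this value $\sqrt{c^2-1}=\sqrt{7-4\sqrt3}=2-\sqrt3$, and then both expressions equal $2\sqrt2-2\sqrt6+2\sqrt2$. I would then check that the difference changes sign only once on $[1,\sqrt2]$, for instance by squaring after isolating the radical.

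Finally, I check that the pieces match at the breakpoints: at $c=1$ both sides give $2\sqrt2-2$, and for $1\le c\le\sqrt6-\sqrt2$ the bound $2\sqrt2-2c\le1$ makes the extra constraint $d\le1$ automatic. This assembles the three cases of (ii).
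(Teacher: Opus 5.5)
Your proposal is correct and follows the paper's route. You specialize Lemma~\ref{l2}, Lemma~\ref{l3} and Remark~\ref{r1} to $a=b=1$, compute the minima, and compare $2\sqrt2-2c$ with $c(1-\sqrt{c^2-1})$. The substitution $s=\sin t+\cos t$ is a cleaner packaging of the paper's factored derivatives $(\cos t-\sin t)\cdot(\dots)$: concavity and unimodality in $s$ replace the paper's root-counting, so the thresholds $\sqrt2/2$ and $3\sqrt2/4$ never appear.

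The step you only sketch is done explicitly in the paper. For $c\ge1$, squaring $c\sqrt{c^2-1}\le 3c-2\sqrt2$ is legitimate because $3c-2\sqrt2>0$, and the resulting quartic factors as $(c-\sqrt2)^2(c-\sqrt6+\sqrt2)(c+\sqrt6+\sqrt2)\le 0$.
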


\n{\it Proof.} (i) The derivative of the function $f_{(1,1,c)}$ is given by
$$f'_{(1,1,c)}(t)=(\cos t-\sin t)(1-c(\sin t+\cos t)).$$
Note that the equation
$$1-c(\sin t+\cos t)=0$$
has two distinct roots in $\DD$ (with sum $\pi/2$) for
$c\in(\sqrt2/2,1],$ and it has a double root $\pi/4$ for
$c=\sqrt2/2;$ otherwise, it has no roots in $\DD.$ Hence
$m(1,1,c)=f_{(1,1,c)}(0)=f_{(1,1,c)}(1)=1$ for $c<\sqrt2/2,$
$m(1,1,c)=f_{(1,1,c)}(\pi/4)=\sqrt{2}-c/2$ for $c>1,$ and
$m(1,1,c)=\min\{1,\sqrt{2}-c/2\}$ for $c\in[\sqrt2/2,1].$
Therefore (i) follows from Lemma \ref{l2}.

\s

(ii) To prove this statement we will use the function
$g_{(1,1,c)}$ defined in (\ref{2}). Its derivative is given by
$$g'_{(1,1,c)}(t)= \frac{(\sin t-\cos t)(1+\sin t\cos t-c(\sin t+\cos
t))}{\sin^2t\cos^2t}.$$ Note that the equation
$$1+\sin t\cos t-c(\sin t+\cos t)=0$$ has two distinct roots in $\DD^0$
(with sum $\pi/2$) for $c\in(1,\frac{3\sqrt{2}}{4})$ and a double
root $\pi/4$ for $c=\frac{3\sqrt{2}}{4};$ otherwise, it has no
roots in $\DD^0.$ Hence
$\min_{\Delta^0}g_{(1,1,c)}=g_{(1,1,c)}(\pi/4)=2\sqrt2-2c$ for
$c\le 1,$
$\min_{[t_1,t_2]}g_{(1,1,c)}=g_{(1,1,c)}(t_1)=g_{(1,1,c)}(t_2)=
c(1-\sqrt{c^2-1})$ for $c>\sqrt6-\sqrt 2,$ and
$\min_{[t_1,t_2]}g_{(1,1,c)}=\min\{2\sqrt2-2c,c(1-\sqrt{c^2-1})\}$
for $c\in[1,\frac{3\sqrt{2}}{4}].$ Note that for $c\ge 1,$
$$2\sqrt2-2c\le c(1-\sqrt{c^2-1})\Leftrightarrow c^2(c^2-1)\le(3c-2\sqrt2)^2$$
$$\Leftrightarrow(c-\sqrt2)^2(c-\sqrt6+\sqrt2)(c+\sqrt6+\sqrt2)\le 0.$$
Therefore (ii) follows from the proof of Lemma \ref{l3} and Remark
\ref{r1}.\qed

\begin{rem}\label{r2} Let $a=b=1$ and $R\ss\A_1$ be a rectangle with side
lengths $c$ and  $d.$ As we know from the proof of Proposition
\ref{p2} (and also from the above corollary), if $c\geq 1$ and $R$
can be moved into the corridor $\B_3$ by an anti-rotation, then
the same can be done by a rotation. Note, however, that for $c<1$
this is not always true. For example, if $d=1$ and $c\in
(2(\sqrt{2}-1), \sqrt{2}-1/2]$ it follows from Corollary \ref{cr
4} that $R$ can be moved into the corridor $\B_3$ by an
anti-rotation, but this cannot be done by a rotation.
\end{rem}

\section{Proof of Theorem \ref{t1}}

The equalities for the sets $\R_{ij}$ follow from their
characterizations in statements (i)--(iv) which we prove below.

\subsection{Proof of the \emph{if part} of Theorem \ref{t1}}

We will prove the \emph{if part } of each of the statements
(i)-(iv).

(i) It follows  from the proof of the \emph{if part } of
Proposition \ref{p1} since the movement for the rectangles in
$\R_{00}$ used there works for the rectangles in $\R_{01}$ and
$\R_{02}$ as well.

\s

(ii) Let $h \le d\le a\w b$ and $cd\le ab $. Then we move the
rectangle $R=KLMN $ as in the proof of Proposition \ref{p1} to a
position where $K=A,L\in \B_3$ and $ M,N\in\A_0$. We can assume
that $N\in Oy^+$ since otherwise we replace the ``wall'' $Oy$ of
the corridor $\A_0$ with the line $\{x=a-a'\},$ where $a'$ is the
distance from $N$ to the line $\{x=a\}$. The new corridor has
width $a'<a$ and the given conditions are satisfied since $d\ge
h>h'$ and we know from the proof of Proposition \ref{p1} that
$cd\le a'b$. Consider the tangent to $k(C;d)$ from $A(a,0)$ and
use the same notations as in the proof of the \emph{only if part}
of (ii) below (Fig.~\ref{fig3B}). Then
$$d=\frac{ab}{\sqrt{a^2+a_2^2}} \ , \
h=\frac{ab}{\sqrt{a^2+b^2}}$$ which imply  that $d>h
\Leftrightarrow a_2<b .$ We have also that  $c\le ab/d = |AA_2|$.
Using Remark $\textbf{(B)}$ in Appendix we conclude that this
inequality holds for the tangents to $k(C;d)$ from the points on
$Ox$ with abscissa $\ge a$. Now it follows from the proof of Lemma
\ref{l2} that one can move $R$ in $\B_3$ by a rotation around the
corner $C$.

Finally, if $d\le h\w m(a,b,c) $, then the desired movement of $R$
in $B_3$ is the same as that in the proof of the \emph{if part} of
Lemma \ref{l3} .

\s

 \begin{figure}[!t]
 \begin{center}
     \begin{subfigure}[]{0.38\textwidth}
     \includegraphics[width=\textwidth]{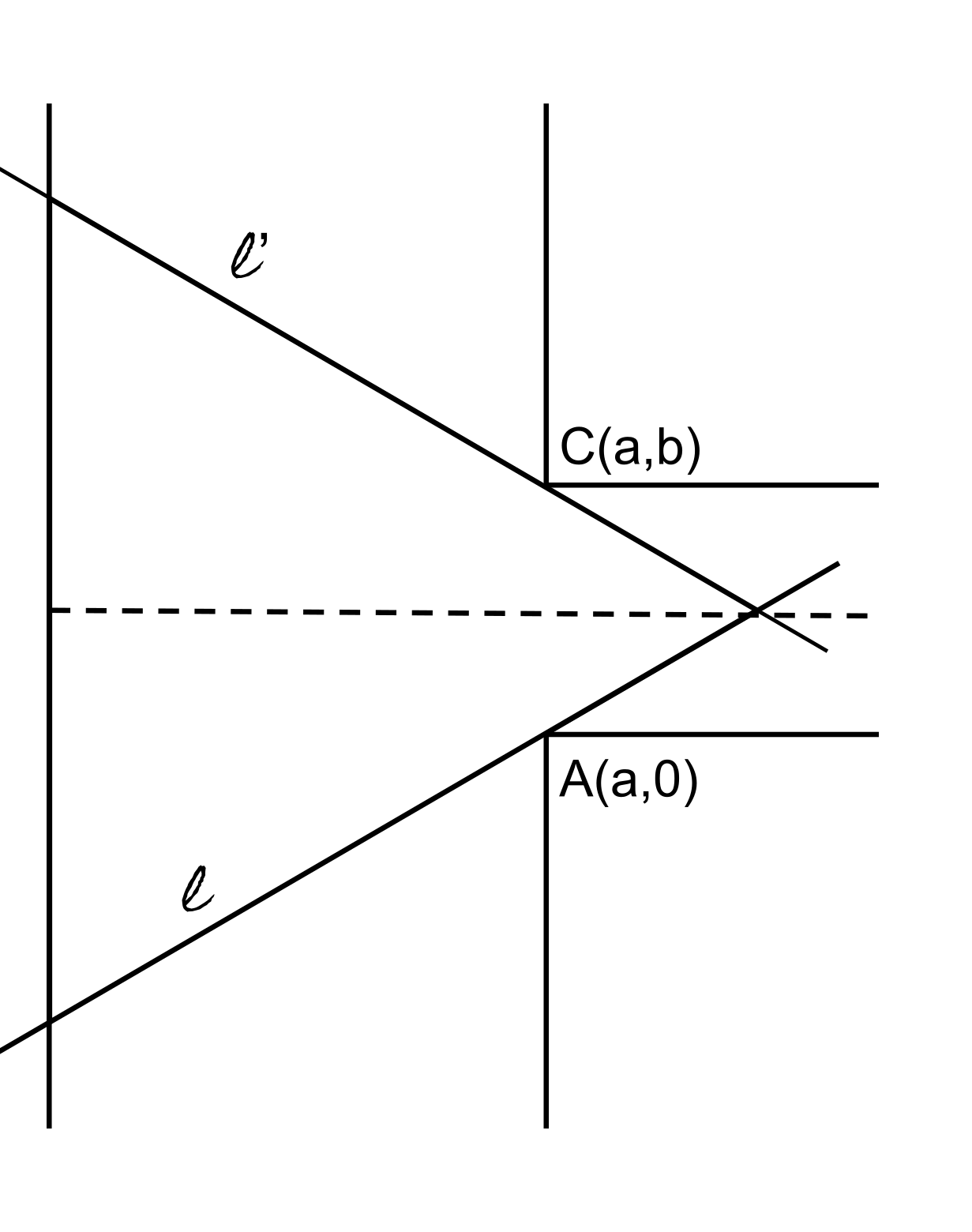}
     \caption{}
     \label{fig3A}
     \end{subfigure}
     \hspace{1cm}
     \begin{subfigure}[]{0.42\textwidth}
     \includegraphics[width=\textwidth]{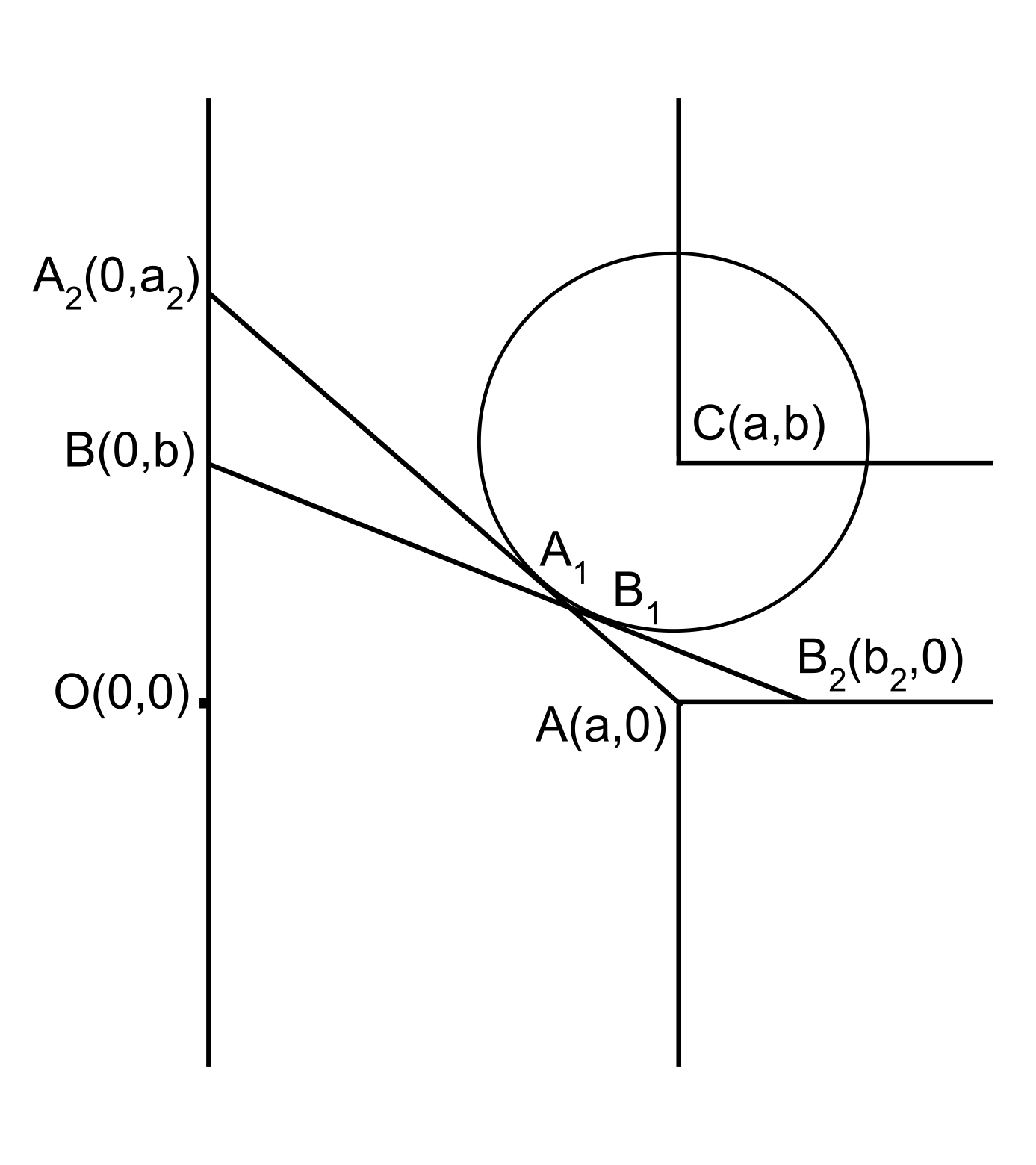}
     \caption{}
     \label{fig3B}
     \end{subfigure}
 \caption{The corridor $\C_{03}$.}
 \label{fig3}
 \end{center}
 \end{figure}

\s

(iii) Since $\R_{12}\ss \R_{10}$ we have to prove that a rectangle
$R$ in $\A_1$ with side lengths satisfying one of the three
conditions can move in $\B_2 .$ If $d\le h,$ then it follows from
Remark \textbf{(B)} in Appendix that $[AA_2]$ is the shortest
segment with ends on $Oy$ and $[OA],$ respectively which is
tangent to the circle $k(C;d)$. This segment has length $ab/d$ and
it follows from Proposition \ref{p1} and Proposition \ref{p2} that
in all three cases the rectangle $R$ can be moved from $\A_1$ into
$\B_2.$ Note that to apply Proposition \ref{p1} we need to show
that the condition $d\leq m(a,b,c)$ implies the inequality $cd\leq
ab .$ To see this set $\varphi= \arccos (a/l). $ Then

\begin{equation}\label{8}
cd\leq cf_{(a,b,c)}(\varphi)=abc(2/l-c/l^2)\leq ab
\end{equation}
with equality only if $c=l, d=h$.

\s

(iv) It follows from the proof of the \emph{if part } of
Proposition \ref{p2} since the movement for the rectangles in
$\R_{13}$ used there is not affected by the upper wall of the
corridor  $\B_3$ and it works for the rectangles in $\R_{11}$ as
well.

\n\subsection{Proof of the \emph{only if part} of Theorem
\ref{t1}}

We will prove the \emph{only if part } of each of the statements
(i)-(iv).

\s

(i) It follows from Proposition \ref {p1} since $\R_{01}\cup
\R_{02}\ss \R_{00}$.

\s

(ii) Let $R\ss \A_0$ be a rectangle with side lengths  $c$ and $d,$
$c\ge d,$ which moves in $\B_3,$ around the corner $C$. If $d>h$
the \emph{only if part} follows from Proposition \ref{p2}. Let now
$d<h$. We can assume that the initial position of $R$ is such that
its sides with length $c$ are parallel to $Oy$, while at its final
position they are parallel to $Ox$. Let $l$ be the tangent to the
circle $k(C,d)$ for which the minimum $n(a,b,d)$ of $g_{(a,b,d)}$
is attained and $l'$ be the line which is symmetric to $l$ with
respect to the line $y=b/2$ (Fig.~\ref{fig3A}). Then at some
moment the sides of $R$ with length $c$ are parallel either to
$l$, or to $l'$ and by symmetry, we can assume that these sides
are parallel $l$. Consider the tangents from $A$ and $B$ to the
circle $k(C,d)$ such that they both intersect $Ox^+$ and $Oy^+$.
Denote by $A_1$ and $B_1$ their tangent points to $k(C,d)$, and by
$A_2=(0,a_2)$ and $B_2=(b_2,0)$ their intersection points with
$Oy^+$ and $Ox^+$, respectively (Fig.~\ref{fig3B}). Since $d<h$ it
follows that $a_2> b $ and $ b_2> a $ which shows that the line
$l$ intersects the segments $[B,A_2]$ and $[A,B_2]$. Now it is
easy to see that at the moment when the sides of $R$ with length
$c$ are parallel $l$ we have $c\le n(a,b,d)$ whatever is the
position of $R$. Hence $d\le m(a,b,c) $ and the \emph{only if
part} for $d<h$ is also proved.

\s

(iii) It follows from Proposition \ref{p1} that it is enough to
prove that if  $h<d<a\w b,$ $c>a\vee b$ and $R\in\R_{10},$ then
$d\le m(a,b,c).$

Let $\ell$ be the line through $O$ which is parallel to the sides
of $R$ with length $c$ and let  $\v\in [0,\pi/2]$ be the angle
between $\ell$ and $Ox$. As in the proof of Proposition \ref{p1}
we can assume that $\v=\pi/2$ in the initial position of $R$.
Consider the last moment until at least three of its vertices lie
in $\A_{1}$. Having in mind that $c>a\vee b,$ it follows that at
that moment $R=KLMN,$ where $|KL|=d$ and $K\in\D,$
$L\in\B_1\setminus\D,$ $M,N\in\A_1\setminus\D .$ Let $y_K$ and
$y_L$ be the ordinates of $K$ and $L$. It is clear that $y_K\neq
y_L$. If $y_K>y_L,$ then  $R\subset \B_3$ and the inequality
$d\le m(a,b,c)$ follows from Proposition \ref{p2}. Let now
$y_K<y_L$. Then one can easily see that $\v\le\psi,$ where $\psi$
is the angle corresponding to the tangent to the circle $k(C;d)$
through $A$. Since $d>h,$ it follows from Remark $\textbf{(B)}$ in
Appendix that $\psi<\psi_0,$ where $\psi_0$ is the angle
corresponding to the tangent to $k(C;d)$ for which the minimum
$n(a,b,d)$ of the function $g_{(a,b,d)}$ is attained. It follows
by continuity that at some moment the line $\ell$ is parallel to
that tangent. For such a position of $R$ it is clear that $c\le
n(a,b,d)$ which is equivalent to $d\le m(a,b,c).$

\s

(iv) It follows from Proposition \ref{p2} since $\R_{13}\ss
\R_{11}$.\qed

\section{Proof of Theorem \ref{t2}}

Let  $P$ be a rectangular parallelepiped that can move around the
corner of the corridor $S_{ij}$. We will show that $V_P\le abc .$
To do  this consider the last moment until at least six vertices
of $P$ lie in $\A_i\times (0,c)$. Set $\D'=\o{\D}\times[0,c].$
Then at least three vertices $K, L , M$ of a face of $P$ lie in
$\B_j\times (0,c)\cup\D'$ with $M\in\D' $ and the corresponding
vertices $K_1, L_1, M_1$ of its opposite face lie in
$\o{\A_{i}}\times[0,c].$ It is clear that the edges $KK_1$ and
$LL_1$ meet $\D' $ at some points $K_2$ and $L_2.$ Let the line
$MM_1$ meet the plane $\{x=0\}$ at a point $M'$ and let $\theta$
be the angle between this plane and the plane $(KLM).$ Then
$$2S_{\t KLM}=2S_{\t K_2L_2M}\cos\theta\le bc\cos\theta,\ MM_1\le
MM'= \frac{a}{\cos\theta},$$ and we get $V_{P}\le abc.$

If $V_{P}=abc,$ then $2S_{\t K_2L_2M}=bc,$ i.e. two of the
vertices of $\t K_2L_2M$ are adjacent vertices of $\D',$ and the
third one is lying on its opposite side. Then it is easy to see
that $P=R\times(0,c),$ where $R\in\R_{ij}.$ (In fact $M_1=M'_1$
and $\D'=P\cap \{x=0\} .$ ) Now Theorem \ref{t2} follows by Lemma
\ref{l1}  which we prove below.

\s

\n{\it Proof of Lemma \ref{l1}}. Let $\C\ss P$ be a right
circular cylinder whose basses circles lie on $\alpha$ and $\beta$
and have diameter $r>0$. Suppose that at some moment of the
movement of $P$ in the layer between $\alpha$ and $\beta$ it, and
hence $\C$, is inclined  to these planes. Consider the
intersection of  $\C$ with the plane through the centers of its
basses circles and orthogonal to $\alpha$ and $\beta$. It is a
rectangle $R=KLMN$ with $|KL|=|MN|=c$(the distance between
$\alpha$ and $\beta$) and $|NK|=|ML|=r$. Let $NK\cap \alpha=K_0,
MN\cap \alpha=N_0$ and $\angle N_0K_0N=\varphi_0$. We can assume
that $\C$ is inclined to $\alpha$ and $\beta$ such that
$\varphi_0\in (0 , \pi/2)$.Then

\begin{equation}\label{4}
r\le |N_0N|\cot\varphi_0 \le\left(\frac
{c}{\cos\varphi_0}-c\right)\cot\varphi_0=c\tan\frac{\varphi_0}{2} .
\end{equation}
and it follows by continuity that this inequality holds for all
$\varphi \in (0 , \varphi_0)$. Now letting $\varphi\rightarrow 0+$
in (\ref{4}) we get $r\le 0$, a contradiction. \qed

\section{Appendix}

In this section we prove  some technical facts that are used in
the proofs of the results stated in Section 1.

\s

\n{\bf A.} Note that in some particular cases the minimum
$m(a,b,c)$ of the function $f_{(a,b,c)}$ can be found explicitly.
For example, it follows by Remark \textbf{(B)} below that
$m(a,b,l)=h$ and $m(a,b,c)=0$ for $c\ge (a^{2/3}+b^{2/3})^{3/2}
.$  We also know from Corollary \ref{cr 4} that
$m(a,a,c)=a\sqrt{2}-c/2$ for $c\ge a$.

In the general case $m(a,b,c)$ is a rational function of the
largest root of a quartic equation and hence can be found in
radicals with respect to $a,b,c .$ Indeed, the derivative of
$f_{(a,b,c)}$ is given by
$$f_{(a,b,c)}'(t)=a\cos t-b\sin t-c\cos 2t$$ and setting $x=\tan
t$ for $|t|<\pi/2$  we get
$$p(x):=(x^2+1)f'_{(a,b,c)}(\arctan x)=(a-bx)\sqrt{x^2+1}+c(x^2-1).$$
We set also $$q(x)=(a-bx)\sqrt{x^2+1}+c(1-x^2)$$ and note that
$r=pq$ is a polynomial of degree $4$. One checks easily that
$p(\pm\infty)=+\infty,$ $p(0)<0,$ $q(\pm\infty)=-\infty$ and
$q(0)>0.$ Therefore, each of the functions $p$ and $q$ has exactly
one positive and one negative zero and these are all four zeros of
$r.$ Denote by $x_1$ and $y_1$ the positive zeros of $p$ and $q$,
respectively. If $a=b$, then $x_1=y_1=1.$ If, for example $a<b ,$
then $p(1)=q(1)<0$. Hence $y_1<1<x_1$ and $x_1$ is the largest
zero of $r.$ Therefore we can find $x_1$ in radicals with respect
to $a,b,c ,$ and do the same for $m(a,b,c)=f(\arctan x_1).$

\s

\n{\bf B.} For fixed $a, b> 0$ and $0\le d\le a\w b $ consider
$n(a,b,d):=\inf_{\DD^o}g_{(a,b,d)},$ where
$$g_{(a,b,d)}(t):=\frac{a\sin t+b\cos t-d}{\sin t\cos t},\ t\in\DD^o.$$
Recall that $g_{(a,b,d)}(t)$ is the length of the segment which
the tangent to the circle $k(C;d)$ at the point $(a-d\sin
t,b-d\cos t)$ cuts from the first quadrant. Hence $n(a,b,d)$ is
the length of the shortest such segment.

Straightforward computations show that
$$g'_{(a,b,d)}(t)=\frac{a\sin^3t-b\cos^3t+d\cos2t}{\sin^2t\cos^2t}$$
and
$$g''_{(a,b,d)}(t)\sin^3t\cos^3t=b(\cos^3t+\cos^5t)+a(\sin^3t+\sin^5t)-d(1+\cos^22t)\ge$$
$$d((1-\sin t)^2\sin^3t+(1-\cos t)^2\cos^3t)\ge 0 .$$ Hence $g_{(a,b,d)}$ is a
convex function. Since  $g'(0+)=-\infty$ for $d<b,$ $g'(0+)=-d/2$ for $d=b,$
$g'(\pi/2-)=+\infty$ for $d<a$ and $g'(\pi/2-)=d/2$ for $d=a$ it
follows that $g'$ has a unique zero $t_0\in\DD^o$ and
$n(a,b,d)=g_{(a,b,d)}(t_0).$

Set $x=\tan t$. Then the equation $g_{(a,b,d)}'(t)=0$ takes the
form  $u(x)=0,$ $x>0,$ where
$$u(x)=ax^3-b-d(x^2-1)\sqrt{x^2+1}.$$
We know that $u$ has a unique positive zero $x_0=\tan t_0$ and
this zero is simple since $g_{(a,b,d)}''>0.$ As in \textbf{(A)} we
set
$$v(x)=ax^3-b+d(x^2-1)\sqrt{x^2+1}$$
and note that $uv$ is a polynomial of degree $6$. One easily
checks that  $v'(x)>0$ for  $x>0,$ $v(0)<0$ and
$v(+\infty)=+\infty$ which show that $v$ has a unique positive
zero $x_1$ and this zero is simple. Hence $x_0$ and  $x_1$ are the
only positive zeros of $uv.$ If $a=b ,$ then $x_0=x_1=1,$ and if,
for example, $b<a,$ then $0<x_0<\sqrt[3]{b/a}<x_1<1$ (compare with
\cite{Mo}).

It is clear that $n(a,b,d)$ is a continuous and strictly
decreasing function in $d\in[0,a\w b].$ Let $d\neq 0$ and
$\varphi= \arccos (a/l). $ Then $$g_{(a,b,d)}(\varphi)=
ab\left(\frac{2}{l}-\frac{d}{l^2}\right)\le\frac{ab}{d} $$ which implies
that
\begin{equation}\label{7}
n(a,b,d)\le \frac{ab}{d}
\end{equation}
with equality if and only if $d=h$. Hence $n(a,b,h)=l$ and
$m(a,b,l)=h .$

Now consider the case $d=0$ which corresponds to the \emph{leader
problem} \cite{Ka} .It follows easily by the above results that in
this case $\tan t_0=(b/a)^3$ and
$n(a,b,0)=(a^{2/3}+b^{2/3})^{3/2}$. This together with Theorem
\ref{t1} implies Corollary \ref{cr1}.

Finally, if $a=b ,$ then $t_0=\pi/4$ and $n(a,a,d)=2\sqrt2a-2d.$
which we know from Corollary \ref{cr 4}.

Note that the above three particular cases are related to
\cite[Questions 2 and 6, p. 200]{Mo}.

\s

\n{\bf C.} The condition  $d\le m(a,b,c)$ for rotation around the
corner $C$ (see e.g. Lemma \ref{l2}) can be written as $a\ge
k(c,d,b):=\sup_{\DD'}h_{(c,d,b)},$ where $\DD'=(0,\pi/2]$ and
$$h_{(c,d,b)}(t)=c\cos t+\frac{d}{\sin t}-b\cot t.$$
It is clear that $d\le k(c,b,d),$ and that $k(c,d,b)<+\infty$ is
equivalent to $d\le b.$

We have
$$w(t):=h'_{(c,d,b)}(t)\sin^2t=b-c\sin^3t-d\cos t,\ 2w'(t)=\sin t(2d-3c\sin2t).$$
If $2d<3c,$ then $w$ has two zeros  $t_1<t_2\in \DD^0 ,
t_1+t_2=\pi/2$, $w$ is strictly increasing in the intervals
$[0,t_1]$ and $[t_2,\pi/2] ,$  and it is strictly decreasing in
the interval $[t_1,t_2].$ In particular (c.f. \cite{Mi}) if  $b\le
c ,$ then $w$ has a unique zero $t_\ast\in\DD^0$  and $t_1<t_1'\le
t_\ast\le t_2'<t_2,$ where $w(t_1')=w(0)$ and $w(t_2')=w(\pi/2)$.
Therefore $k(c,d,b)=h_{(c,d,b)}(t_\ast)\ge d.$ (Note that
similarly to \textbf{(B)}, $\arctan t_\ast$ is a zero of a six
degree polynomial.)

Having in mind the inequality $k(c,d,b)\ge d$ it is natural to ask
when the equality is attained. In other words the question is when
given $b\ge d$ and $c ,$ the least $a$ such that $m(a,b,c)\ge d,$
is equal $d.$ We will show that this is so if $c\le(\sqrt
2-1/2)d.$ To prove this it is enough to check that $m(a,b,c)=d$ if
$a=b=d$ and $c=(\sqrt 2-1/2)d $ which follows from \textbf{(A)}
(as well as from \textbf{(B)}, interchanging $c$ and $d$). Note
that the above constraint for c is the optimal one.

One can prove in the same way that given $a\le b$ and $c\le(\sqrt
2-1/2)a,$ the largest $d$ for which $m(a,b,c)\ge d$ is equal to $a.$

\end{document}